\DeclareMathOperator*{\argmax}{argmax}
\newtheorem{thm}{Theorem}[section]
\newtheorem{prop}[thm]{Proposition}
\newtheorem{lemA}{Lemma}[section] 							
\newtheorem{cor}[thm]{Corollary}
\newtheorem*{asm*}{Assumptions}
\theoremstyle{remark}
\newtheorem*{rem*}{Remark}
\theoremstyle{definition}
\title{Best Strategy for Each Team in The Regular Season 
to Win Champion in The Knockout Tournament}
\author{Zijie Zhou\\
\\
\normalsize{Department of Mathematics, Purdue University,}\\
\normalsize{zhou759@purdue.edu}
\\
\\
Mentor: Jonathon Peterson\\
\\
\normalsize{Professor of Department of Mathematics, Purdue University,}\\
\normalsize{peterson@purdue.edu}
}
\begin{document}

\maketitle

\begin{abstract}
In `J. Schwenk.(2018) \citep{schwenk2000correct} What is the Correct Way to Seed a Knockout Tournament? Retrieved from The American Mathematical Monthly' , Schwenk identified a surprising weakness in the standard method of seeding a single elimination (or knockout) tournament. In particular, he showed that for a certain probability model for the outcomes of games it can be the case that the top seeded team would be less likely to win the tournament than the second seeded team. 
This raises the possibility that in certain situations it might be advantageous for a team to intentionally lose a game in an attempt to get a more optimal (though possibly lower) seed in the tournament. 
We examine this question in the context of a four team league which consists of a round robin ``regular season'' followed by a single elimination tournament with seedings determined by the results from the regular season \citep{vu2011fair}. Using the same probability model as Schwenk we show that there are situations where it is indeed optimal for a team to intentionally lose. Moreover, we show how a team can make the decision as to whether or not it should intentionally lose. We did two detailed analysis. One is for the situation where other teams always try to win every game. The other is for the situation where other teams are smart enough, namely they can also lose some games intentionally if necessary. The analysis involves computations in both probability and (multi-player) game theory. 
\end{abstract}

\section{Introduction}
In contemporary society, sport competitions such as NBA, NCAA basketball, baseball are more and more prevalent and attracting. In most of these competitions, every team in the knockout tournament has to play head-to-head matches to eliminate the rival and finally tries best to win the tournament. Whether the knockout tournament is fair and what strategy each team has under the knockout tournament is sparking argue between fans every day. In this article, we use the single elimination tournament model created by J. Schwenk.(2018) \citep{schwenk2000correct}. We assume that there are four teams in the playoff: $a_1$, $a_2$, $a_3$, and $a_4$. Each of them has a weight, $v_1$, $v_2$, $v_3$, $v_4$, respectively, which shows the strength of a team. The larger weight, the stronger the team is. Suppose that $v_1 \geq v_2 \geq v_3 \geq v_4$, here $a_1$ is the best team in the knockout tournament and we will give the best strategy for it. Let the probability team $v_i$ beats $v_j$ be $\frac{v_i}{v_i+v_j}$. The schedule of the knockout tournament is in figure \ref{fig:table}. In the first round, the first seed plays against the fourth seed. The second seed plays against the third seed.  In the second round, the winner between the first seed and the fourth seed plays against the winner between the second seed and the third seed. According to the model created by J. Schwenk \citep{schwenk2000correct}, he showed that under some specific situation it is possible for a lower seeded team to enter the tournament with a larger probability to win the tournament than the highest seeded team.  This implies that a team might intentionally lose a game during the regular season to become a lower seed to enter the tournament. If all other teams try to become higher-seeded team, it is not hard for us to make a strategy which we can enter the tournament with the proper seed which has the highest probability to win the champion. However, it is more interesting that if all other teams also lose some games intentionally to maximize the probability to win the tournament. In this situation, we are faced with a game theory problem. We will show how to find the best strategy in this situation and whether every team can maximize the probability to win the tournament respectively.

\begin{figure}[h]
\begin{center}
\includegraphics[width=2.5in,height=2.5in]{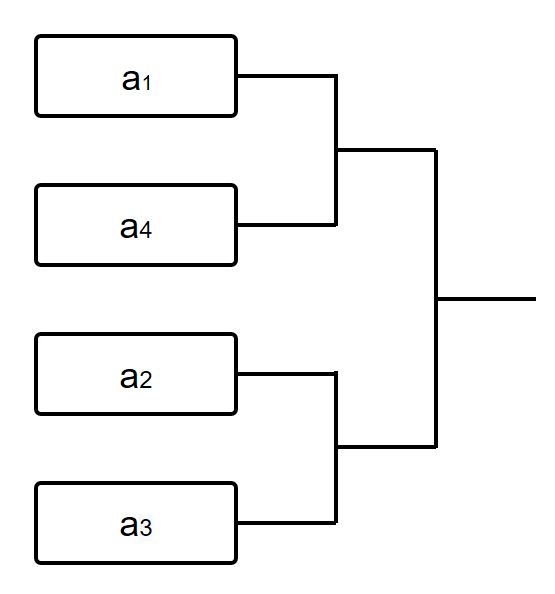}
\caption{The schedule of the tournament model}\label{fig:table}
\end{center}
\end{figure}

\subsection{Main Contributions}
We build a regular season model with four teams and conduct a detailed analysis of the best team's strategy, which can help the best team to decide whether to win or lose intentionally in every week in the regular season. It is noteworthy that this model is based on assumption that other teams try their best to win every match, or we can say other teams are not smart enough. We called this Four Teams Regular Season with Not Smart Enough Rivals Model (FRNS). However, in the real world, every professional team is smart enough. Every team has intelligent people to make decisions for them. Thus, we build another model, which assumes that all teams are smart enough and are able to make the most correct decision for them. We call this Four Teams Regular Season with Smart Enough Rivals Model (FRS). More importantly, FRS can get strategy for every team in the regular season. 

In game theory, we define pure strategy as a strategy which determines the move a player will make for any situation they could face. We define mixed strategy as an assignment of probability to each pure strategy.
In the FRNS model, undoubtedly, the strategy we give to the best team in each week depends on other teams' weights and the current performance of each team. We define the $\pi_i$ as the action variable for team $a_i$. That is $\pi_i=\alpha$ represents team $a_i$ tries to win with probability $\alpha$ and loses intentionally with probability $1-\alpha$, where $\alpha \in [0,1]$. We define $\Pi$ as the collection of vectors $\pi=(\pi_1,\pi_2,\pi_3,\pi_4)$, i.e. $\Pi = [0,1]^4$. We also define $P_{\pi}^{(i)}(m,W,V)$ as the probability for team $a_i$ to win the champion under action vector $\pi \in \Pi$, team weight vector $V$ and performance vector $W$ in week $m$. The team weight vector $V=[v_1, v_2, v_3, v_4]$ which contains team weights for all team. The performance vector $W=[W_1,W_2,W_3,W_4]$, where $W_i$ represents the number of winning games before week $m$ for team $a_i$. Thus, our objective is to find $\sup_{\pi = (\alpha,1,1,1)}P_{\pi}^{(1)}(m,W,V)$ for every $m$, $W$ and $V$. We claim that: under the FRNS model, for all $m$, $W$ and possible $V$, $\argmax_{\pi = (\alpha,1,1,1)}P_{\pi}^{(1)}(m,W,V) \in \{(0,1,1,1),(1,1,1,1)\}$, which shows that the best strategy for the best team in every week is pure strategy. Intuitively, the reason is that other teams' strategies are fixed, i.e. $\pi_2, \pi_3, \pi_4 = 1$.

However, in the FRS model, the action of every team is not fixed due to all teams are smart enough. Our objective is to find $\sup_{\pi \in \Pi}P_{\pi}^{(i)}(m,W,V)$ for every $m$, $W$, $V$ and $ i \in \{1,2,3,4\}$. To our surprise, the final result shows that the best strategy of every team is not mixed strategy, but pure strategy for almost all possible team weight. Every team can have mixed strategy only in a special case where team $a_1$ is as strong as $a_2$, at the same time team $a_3$ is as strong as $a_4$.
\begin{thm} \label{frs}
Under the FRS model, for all $m$, $W$, $\pi=(\pi_1,\pi_2,\pi_3,\pi_4)$ satisfies that for each $i \in \{1,2,3,4\}$, $P_\pi^{(i)}(m,W,V) \geq P_{\pi^{'}}^{(i)}(m,W,V)$ for $\forall \pi^{'}$ s.t. $\pi_j^{'}=\pi_j$, $j \neq i$ is a mixed strategy, i.e. $\pi \in (0,1)^4$ if and only if $v_1=v_2$, $v_3=v_4$. Otherwise, $\pi$ is a pure strategy, i.e. $\pi \in \{0,1\}^4$.
\end{thm}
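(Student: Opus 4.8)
The plan is to read the hypothesis on $\pi$ as the Nash-equilibrium condition for the simultaneous-move stage game played in week $m$ from state $W$, and to exploit that $P_\pi^{(i)}(m,W,V)$ is affine in the single coordinate $\pi_i$ once $\pi_{-i}$ is fixed. In week $m$ team $a_i$ plays exactly one opponent $a_k$, and $\pi_i$ enters $P_\pi^{(i)}$ only through the probability $q_i=\pi_i\bigl[\pi_k\frac{v_i}{v_i+v_k}+(1-\pi_k)\bigr]$ that $a_i$ wins that game; everything downstream is the convex combination $q_i\,U_i^{\mathrm{win}}+(1-q_i)\,U_i^{\mathrm{lose}}$, where $U_i^{\mathrm{win}},U_i^{\mathrm{lose}}$ are the probabilities that $a_i$ eventually takes the title given it wins, respectively loses, the current game and that equilibrium play continues. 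Hence
\[
\frac{\partial}{\partial \pi_i}P_\pi^{(i)}=\Bigl[1-\pi_k\tfrac{v_k}{v_i+v_k}\Bigr]\bigl(U_i^{\mathrm{win}}-U_i^{\mathrm{lose}}\bigr),
\]
and since the bracketed factor is strictly positive, $a_i$'s best response is $\pi_i=1$ when $U_i^{\mathrm{win}}>U_i^{\mathrm{lose}}$, $\pi_i=0$ when $U_i^{\mathrm{win}}<U_i^{\mathrm{lose}}$, and any value in $[0,1]$ precisely when $U_i^{\mathrm{win}}=U_i^{\mathrm{lose}}$. Thus a coordinate $\pi_i$ lies in the open interval $(0,1)$ at equilibrium if and only if $a_i$ is exactly indifferent between winning and losing its week-$m$ game, and $\pi$ is fully mixed if and only if all four teams are simultaneously indifferent.

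I would then compute each gap $U_i^{\mathrm{win}}-U_i^{\mathrm{lose}}$ by backward induction over the remaining weeks, collapsing it onto the bracket-win probabilities of Schwenk's model. Writing $f(a;b;c,d)=\frac{a}{a+b}\bigl(\frac{c}{c+d}\cdot\frac{a}{a+c}+\frac{d}{c+d}\cdot\frac{a}{a+d}\bigr)$ for the chance that a team of weight $a$, drawn against $b$ in round one with $c,d$ in the opposite semifinal, wins the tournament, each $U_i^{\mathrm{win}}$ and $U_i^{\mathrm{lose}}$ becomes---after all later decisions are resolved by the induction---a finite weighted sum of such $f$-values over the seedings reachable by winning versus by losing. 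The indifference condition $U_i^{\mathrm{win}}=U_i^{\mathrm{lose}}$ therefore turns into an equality between two such sums, i.e. a rational equation in $v_1,\dots,v_4$.

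The decisive and most delicate step is to identify the common solution set of these four coupled indifference equations and to show it is exactly $\{v_1=v_2,\,v_3=v_4\}$. The guiding idea is that a decisive game moves $a_i$ between two seeds, so the gap is a difference of bracket probabilities, and clearing denominators should exhibit the upper-bracket contributions as vanishing exactly when $v_1=v_2$ and the lower-bracket contributions as vanishing exactly when $v_3=v_4$, since equal weights make the two candidate brackets probabilistically indistinguishable. The main obstacle I anticipate is ruling out accidental cancellations: I must show that outside $\{v_1=v_2,\,v_3=v_4\}$ at least one team's gap is nonzero at every equilibrium, that the induced strict best responses then propagate to make every gap nonzero, and that this holds uniformly over all weeks $m$, all reachable states $W$, and all seedings, including a consistent resolution of the degenerate ``both teams tank'' outcome left implicit in the game-probability rule.

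For the converse and the final dichotomy, I would check that when $v_1=v_2$ and $v_3=v_4$ the two seedings available to each team are exchangeable, so every gap is identically zero and all teams are simultaneously indifferent; since the action space $[0,1]^4$ is compact and convex and each $P_\pi^{(i)}$ is continuous and affine in its own coordinate, the Debreu--Glicksberg--Fan existence theorem (via Kakutani) guarantees an equilibrium, and indifference makes the interior profile $\pi\in(0,1)^4$ one. Combined with the strictness established above, which forces every coordinate into $\{0,1\}$ whenever the symmetry fails, this yields the stated equivalence.
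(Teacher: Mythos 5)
Your reduction is sound as far as it goes: because $P_\pi^{(i)}$ is affine in $\pi_i$ with a strictly positive coefficient multiplying $U_i^{\mathrm{win}}-U_i^{\mathrm{lose}}$, a coordinate can be interior at equilibrium only if team $a_i$ is exactly indifferent, so a fully mixed equilibrium forces all four indifference equations to hold simultaneously. This matches the paper's setup (its equation system for $\partial E_i/\partial\pi_i$ and Proposition 3.3). Two small points there: your expression for the win probability $q_i$ drops the $\tfrac12(1-\pi_i)(1-\pi_k)$ coin-flip term that the model explicitly includes (the correct own-derivative of $q_i$ is $\pi_k p_{ik}+\tfrac12(1-\pi_k)$, still positive, so the sign argument survives); and in the symmetric case not every interior profile is an equilibrium --- team $a_1$'s indifference requires $A_{34}=\tfrac12$, which forces $\pi_3=\pi_4$, and likewise $\pi_1=\pi_2$ (the paper's Corollary 3.5), so the equilibrium set is the diagonal family rather than all of $(0,1)^4$.

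The genuine gap is that the decisive step --- showing the four coupled indifference equations have no common solution unless $v_1=v_2$ and $v_3=v_4$ --- is not carried out; you describe it as ``the most delicate step,'' state what you would hope the cleared-denominator polynomials look like, and list ``ruling out accidental cancellations'' as an anticipated obstacle. That is precisely the content of the theorem, and hoping the algebra factors cleanly is not a substitute for an argument: the indifference conditions are equalities between rational functions of all four weights, and nothing in your outline controls their simultaneous solution set. The paper closes this gap with a structural mechanism you do not have: it first proves (Lemma A.1, by explicit computation of all twelve $T_{ij}$) the orderings $T_{1A}\geq T_{1B}\geq T_{1C}$, $T_{2B}\geq T_{2A}\geq T_{2C}$, $T_{3C}\geq T_{3A}\geq T_{3B}$, $T_{4C}\geq T_{4B}\geq T_{4A}$, rewrites the indifference conditions as $T_{1A}(T_{2B}-T_{2C})+T_{1B}(T_{2C}-T_{2A})+T_{1C}(T_{2A}-T_{2B})=0$ and its three analogues, and then applies Chebyshev's sum inequality to these ordered sequences to squeeze out the chain of inequalities whose only common solution is the degenerate case $T_{1A}=T_{1B}=T_{1C}$ etc., i.e.\ $v_1=v_2$, $v_3=v_4$. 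Without some replacement for that monotonicity-plus-rearrangement argument (or an equivalent resultant/elimination computation), your proposal establishes only the easy direction and the framework, not the theorem.
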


\section{Analysis for the FRNS Model}
\subsection{Description and Assumption of FRNS Model}
In this section, we want to analyze the `regular season' to get a strategy for the best team to decide whether to try to win or lose intentionally in each 
game. Our logic is that first to analyze the last game in the `regular season', second to analyze the last two games, third to analyze the last three games, and so on. Before doing the analysis, we will first introduce our FRNS model.

The FRNS model is that we have four teams, $a_1$, $a_2$, $a_3$, $a_4$. Their weight is $V=[v_1,v_2,v_3,v_4]$. Assume that $v_1 \geq v_2 \geq v_3 \geq v_4$, so we will help team $a_1$ to get a strategy $\pi_1 \in [0,1]$. The winning probability in a single game between $a_i$ and $a_j$ is $p_{ij}=\frac{v_i}{v_i+v_j }$ \citep{schwenk2000correct}. We suppose that except $a_1$, other teams will try their best to win for every match, i.e. $\pi_2=\pi_3=\pi_4=1$. Define the subset $(\alpha,1,1,1)=\hat{\Pi} \subseteq \Pi$, where $\alpha \in [0,1]$, then $\pi_1$ is such that $(\pi_1,1,1,1)=\argmax_{\pi \in \hat{\Pi}}P_{\pi}^{(1)}(m,W,V)$ such that for any week $m$, performance vector $W$, and possible $V$. In addition, we build a particular schedule for last three weeks (see figure \ref{fig:sch1}).

\begin{figure}[h]
\begin{center}
\includegraphics[width=4in,height=1.5in]{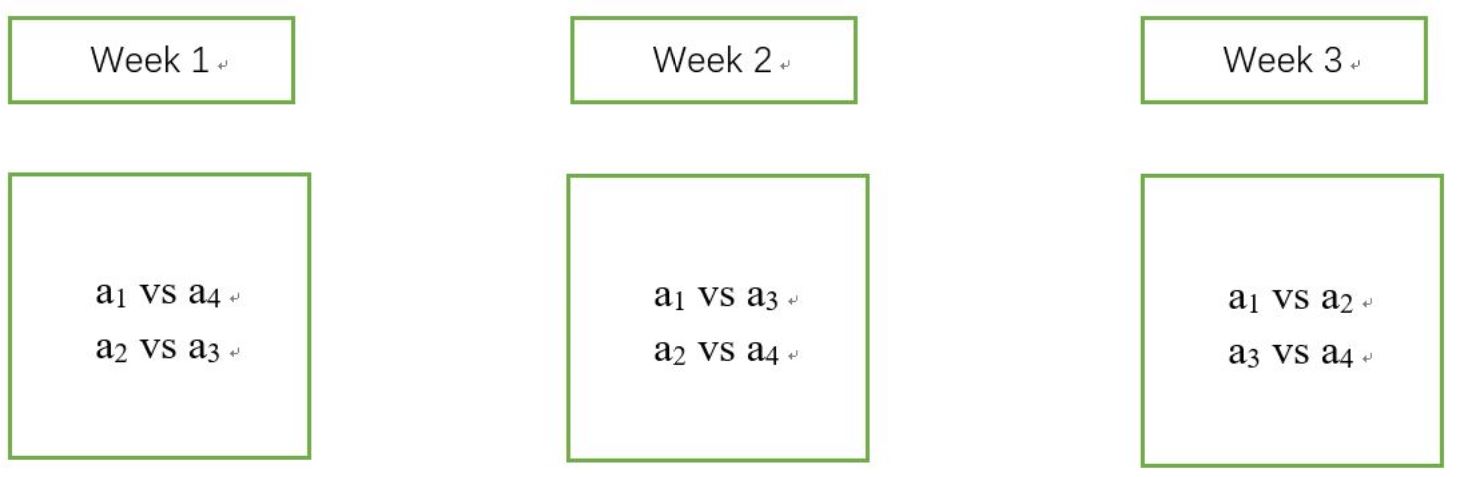}
\caption{A particular schedule for last three weeks}\label{fig:sch1}
\end{center}
\end{figure}

Since there are four teams, there are $4!=24$ seedings in the knockout tournament. However, there exists three types of knockout tournaments, we ignore the exact rank of each team and we only care that which two teams will have a battle in the first week (see figure \ref{fig:sch2}). Here we need another assumption:
\begin{asm*}
If the number of winning games of two teams is the same, then they will flip a fair coin to decide their seeding in the knockout tournament. 
\end{asm*}

For example, if finally $a_1$ and $a_2$ win $2$ games, $a_3$ and $a_4$ win 1 game, then in this situation, $a_1$ and $a_2$ will have same probability, $50\%$, to be the first and the second seed. $a_3$ and $a_4$ will have same probability, $50\%$, to be the third and the fourth seed. If finally $a_1$ wins $3$ games and all of $a_2$, $a_3$ and $a_4$ win 1 game, then, $a_1$ is the first seed and $a_2$, $a_3$ and $a_4$ will have same probability, $33.33\%$, to be the second, third, and the fourth seed.

\begin{figure}[h]
\includegraphics[width=2in,height=3in]{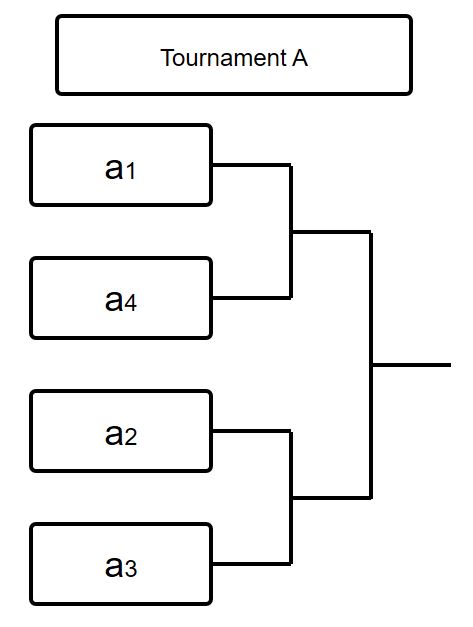}
\includegraphics[width=2in,height=3in]{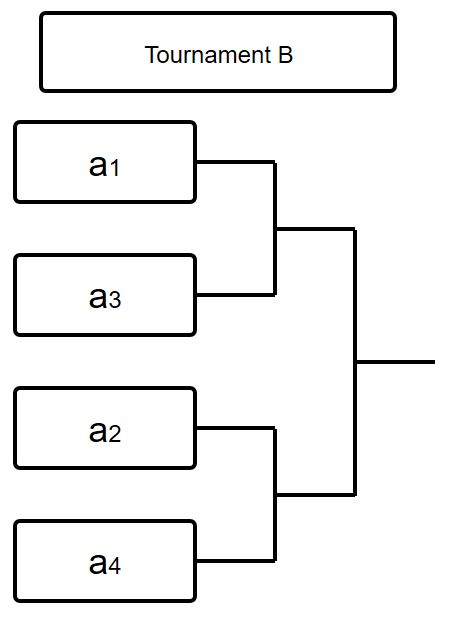}
\includegraphics[width=2in,height=3in]{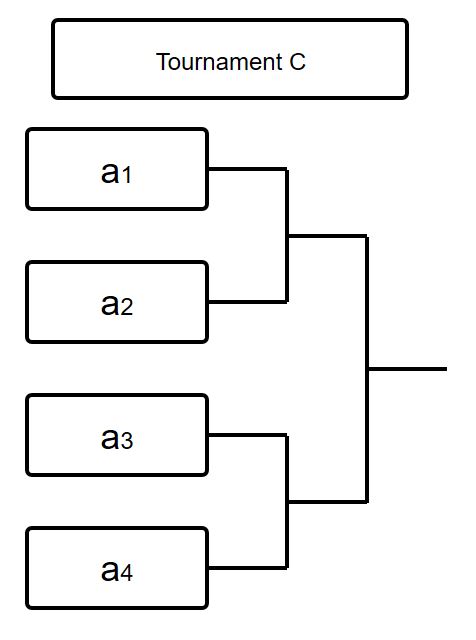}
\caption{three kind of knockout tournaments}\label{fig:sch2}
\end{figure}

An interesting idea is that we find the probability for $a_1$ to win the champion in tournament $A$ is always larger than the one in tournament B and tournament C, no matter what vector $V$ is. Suppose that the probability for $a_1$ to win the champion in tournament $A$, $B$, $C$ is $T_A$, $T_B$, $T_C$, respectively. 

\begin{thm}\label{thm:ordertourn}
For any weight vector $V$, subject to $v_1 \geq v_2 \geq v_3 \geq v_4$, we have
\begin{equation}
T_A \geq T_B \geq T_C
\end{equation}
\end{thm}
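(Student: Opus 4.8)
The plan is to reduce the theorem to two elementary polynomial inequalities by directly computing $a_1$'s championship probability in each of the three bracket types. Since $a_1$ can only become champion by winning both its first-round game and the final, its probability factors as the chance of surviving round one times the chance of beating whoever emerges from the other half of the bracket, averaged over the two possible finalists. Recalling that tournament $A$ pairs $a_1$ against $a_4$ (with $a_2$ versus $a_3$ on the other side), tournament $B$ pairs $a_1$ against $a_3$ (with $a_2$ versus $a_4$), and tournament $C$ pairs $a_1$ against $a_2$ (with $a_3$ versus $a_4$), this gives
\begin{equation}
T_A = p_{14}\bigl(p_{23}p_{12}+p_{32}p_{13}\bigr),\quad T_B = p_{13}\bigl(p_{24}p_{12}+p_{42}p_{14}\bigr),\quad T_C = p_{12}\bigl(p_{34}p_{13}+p_{43}p_{14}\bigr),
\end{equation}
where in each bracket the two finalist probabilities satisfy $p_{ij}+p_{ji}=1$.

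First I would substitute $p_{ij}=v_i/(v_i+v_j)$ and clear denominators. Writing $s_{ij}:=v_i+v_j$, each expression collapses to $v_1^2$ over a product of four pairwise sums, times a short numerator: for instance $T_A = v_1^2(v_1v_2+v_1v_3+2v_2v_3)/(s_{12}s_{13}s_{14}s_{23})$, while $T_B$ and $T_C$ carry numerators $v_1v_2+v_1v_4+2v_2v_4$ and $v_1v_3+v_1v_4+2v_3v_4$ over the denominators $s_{12}s_{13}s_{14}s_{24}$ and $s_{12}s_{13}s_{14}s_{34}$, respectively. The useful observation is that all three share the common factor $v_1^2/(s_{12}s_{13}s_{14})$; the only differences are a single pairwise-sum factor in the denominator ($s_{23}$, $s_{24}$, or $s_{34}$) and the corresponding numerator.

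Next I would establish the two inequalities separately. Cancelling the common factor, $T_A \geq T_B$ is equivalent to $(v_1v_2+v_1v_3+2v_2v_3)s_{24} \geq (v_1v_2+v_1v_4+2v_2v_4)s_{23}$; expanding both sides, almost every term cancels and the difference reduces to $2v_2^2(v_3-v_4)$, which is nonnegative precisely because $v_3 \geq v_4$. Likewise $T_B \geq T_C$ is equivalent to $(v_1v_2+v_1v_4+2v_2v_4)s_{34} \geq (v_1v_3+v_1v_4+2v_3v_4)s_{24}$, whose difference collapses to $2v_4^2(v_2-v_3) \geq 0$ using $v_2 \geq v_3$. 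Chaining these gives $T_A \geq T_B \geq T_C$, with equality in the first comparison exactly when $v_3=v_4$ and in the second exactly when $v_2=v_3$.

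The argument is essentially bookkeeping rather than conceptual, so the only real obstacle is organizational: getting the three bracket expressions right — in particular matching the labels $A$, $B$, $C$ to the correct first-round pairings — and carrying out the cancellations without slips. The pleasant surprise that keeps the proof clean is that each cross-multiplied difference collapses to a single monomial times an ordered difference $v_i-v_j$, so no case analysis or auxiliary estimates are needed; indeed the hypothesis $v_1 \geq v_2 \geq v_3 \geq v_4$ enters only through the two adjacent comparisons $v_3 \geq v_4$ and $v_2 \geq v_3$.
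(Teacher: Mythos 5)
Your proposal is correct and follows essentially the same route as the paper: write each $T_j$ as the first-round survival probability times the averaged final-round probability, pull out the common factor $v_1^2/\bigl((v_1+v_2)(v_1+v_3)(v_1+v_4)\bigr)$, and show each cross-multiplied difference is a nonnegative monomial times an ordered gap. In fact your algebra is cleaner than the paper's: the paper's displayed numerator for $T_B$ contains a slip (it should be $v_2(v_1+v_4)+v_4(v_1+v_2)$), and your collapsed differences $2v_2^2(v_3-v_4)$ and $2v_4^2(v_2-v_3)$ are the correct ones, though the sign conclusions agree either way.
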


The proof of theorem \ref{thm:ordertourn} can be found in appendix $A.1$. 

We have introduced that our logic is to first analyze the strategy in last week. In the last week, there exists fifteen different $W$ vectors: $[2,2,0,0]$, $[2,1,0,1]$, $[2,1,1,0]$, $[2,0,1,1]$, $[1,1,1,1]$, $[1,2,0,1]$, $[1,2,1,0]$, $[1,1,2,0]$, $[1,0,2,1]$, $[1,1,0,2]$, $[1,0,1,2]$, $[0,1,1,2]$, $[0,0,2,2]$, $[0,1,2,1]$, $[0,2,1,1]$. In the next part, we will pick one specific $W$ vector to do analysis as an example.

\subsection{Example $W=[2,2,0,0]$}
If at the beginning of last week, the performance vector $W$ is $[2,2,0,0]$, we first consider the game $a_3$ vs $a_4$. There are two possible results: $a_3$ wins or $a_4$ wins. If $a_3$ wins, $W$ will become $[2,2,1,0]$. If $a_4$ wins, $W$ will become $[2,2,0,1]$. Then we will analyze the match $a_1$ vs $a_2$. If $a_1$ tries to win, two situations may happen: $a_1$ wins and a$_1$ loses. However, if $a_1$ wants to lose intentionally, the only possible result is $a_1$ loses. Assume that $a_3$ defeats $a_4$ and $W=[2,2,1,0]$, if $a_1$ defeats $a_2$, $W$ will become $[3,2,1,0]$, which leads to tournament $A$. If $a_2$ defeats $a_1$, $W$ will become $[2,3,1,0]$, which leads to tournament $B$. 

Now we can calculate the probability for $a_1$ to win the tournament based on different strategy $\pi_1$ in last match. Recall that $a_3$ defeats $a_4$ with probability $p_{34}$, and if it happens, $W$ will become $[2,2,1,0]$. $a_4$ defeats $a_3$ with probability $p_{43}$, and if it happens, $W$ will become $[2,2,0,1]$.

\begin{equation}
W=
\left\{ 
	\begin{array}{lr}
	$[2,2,1,0]$, & \text{with probability $p_{34}$} \\
	$[2,2,0,1]$, & \text{with probability $p_{43}$}	
	\end{array}
\right.
\end{equation} 

If $a_1$ tries to win the last match, the analysis is in table \ref{tab:table2}. Thus, the total probability for $a_1$ to win the champion if $a_1$ tries to win in the last match is $P_{\pi=(1,1,1,1)}^{(1)}(3,[2,2,0,0],V)=p_{12}p_{34}T_A+p_{12}p_{43}T_B+p_{21}p_{34}T_B+p_{21}p_{43}T_A$. If $a_1$ loses intentionally, then the analysis is in table \ref{tab:table3}. Thus, the total probability for $a_1$ to win the champion if $a_1$ loses intentionally in the last match is $P_{\pi=(0,1,1,1)}^{(1)}(3,[2,2,0,0],V)=p_{34}T_B+p_{43}T_A$. Under particular weight vector $V=[v_1,v_2,v_3,v_4]$, we compare $P_{\pi=(1,1,1,1)}^{(1)}(3,[2,2,0,0],V)$ and $P_{\pi=(0,1,1,1)}^{(1)}(3,[2,2,0,0],V)$, the larger one represents the strategy for $a_1$. 

\newcolumntype{d}[1]{D..{#1}}
\newcommand\mc[1]{\multicolumn{1}{c}{#1}}
\begin{table}
\begin{tabular}{| m{0.5in} | m{0.5in}| m{1in}| m{1in}| m{2in} |}
\toprule
 a$_1$ &  a$_3$ & Win Vector (W) & Tournament & Probability for team $a_1$ to win champion\\
  
\midrule
Win & Win & $[3,2,1,0]$ & A & $p_{12}p_{34}T_A$\\
\midrule
Win & Lose & $[3,2,0,1]$ & B & $p_{12}p_{43}T_B$\\
\midrule
Lose & Win & $[2,3,1,0]$ & B & $p_{21}p_{34}T_B$\\
\midrule
Lose & Lose & $[2,3,0,1]$ & A & $p_{21}p_{43}T_A$\\   
\bottomrule
\end{tabular}
\caption{analysis table if $a_1$ tries to win}
\label{tab:table2}

\end{table} 

\begin{table}
\begin{tabular}{| m{0.5in} | m{0.5in}| m{1in}| m{1in}| m{2in} |}
\toprule
 a$_1$ &  a$_3$ & Win Vector (W) & Tournament & Probability for team $a_1$ to win champion\\
  
\midrule
Lose & Win & $[2,3,1,0]$ & B & $p_{34}T_B$\\
\midrule
Lose & Lose & $[2,3,0,1]$ & A & $p_{43}T_A$\\ 
\bottomrule
\end{tabular}
\caption{analysis table if $a_1$ loses intentionally}
\label{tab:table3}

\end{table}   

\begin{thm} \label{alwayswin}
Under $W=[2,2,0,0]$ at the beginning of the last week, $a_1$ should always try to win no matter what weight vector $V$ is.
\end{thm}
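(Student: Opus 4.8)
The plan is to compare the two quantities already computed: the winning probability $P_{\pi=(1,1,1,1)}^{(1)}(3,[2,2,0,0],V)$ when $a_1$ tries to win and $P_{\pi=(0,1,1,1)}^{(1)}(3,[2,2,0,0],V)$ when $a_1$ loses intentionally, and to show that the former always dominates. Concretely, I would form the difference $D = P_{\pi=(1,1,1,1)}^{(1)}(3,[2,2,0,0],V) - P_{\pi=(0,1,1,1)}^{(1)}(3,[2,2,0,0],V)$ and argue that $D \geq 0$ for every admissible $V$, which is exactly the assertion that trying to win is (weakly) optimal.

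First I would regroup the ``try to win'' probability according to the outcome of the $a_3$ vs $a_4$ game, writing it as $p_{34}(p_{12}T_A + p_{21}T_B) + p_{43}(p_{12}T_B + p_{21}T_A)$, and then subtract the ``lose'' probability $p_{34}T_B + p_{43}T_A$ bracket by bracket. The key algebraic step uses the identities $p_{12}+p_{21}=1$ and $p_{34}+p_{43}=1$: inside the $p_{34}$ bracket one gets $p_{12}T_A + p_{21}T_B - T_B = p_{12}(T_A - T_B)$, while inside the $p_{43}$ bracket one gets $p_{12}T_B + p_{21}T_A - T_A = -p_{12}(T_A - T_B)$. Collecting these, the entire four-term difference collapses to the single product
\begin{equation}
D = p_{12}\,(p_{34}-p_{43})\,(T_A - T_B).
\end{equation}

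Finally I would check that each of the three factors is non-negative. The factor $p_{12}$ is a probability, hence $p_{12} \geq 0$. The factor $p_{34}-p_{43}$ is non-negative because $v_3 \geq v_4$ gives $p_{34} = \frac{v_3}{v_3+v_4} \geq \frac{v_4}{v_3+v_4} = p_{43}$. The factor $T_A - T_B$ is non-negative by Theorem \ref{thm:ordertourn}, which guarantees $T_A \geq T_B$ for every weight vector satisfying $v_1 \geq v_2 \geq v_3 \geq v_4$. Hence $D \geq 0$, so trying to win is always at least as good as losing intentionally, establishing the claim.

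I do not anticipate a genuine obstacle here: once the difference is factored, the conclusion is immediate. The only real content is the cancellation that reduces the four-term expression to the clean product above, and the observation that it rests entirely on the tournament ordering $T_A \geq T_B$ supplied by Theorem \ref{thm:ordertourn}. In that sense the substance of this result is inherited from that earlier inequality rather than from any new computation, and the main point to verify carefully is simply that the $p_{21}-1 = -p_{12}$ substitutions are applied correctly so that the two brackets are exact negatives of one another.
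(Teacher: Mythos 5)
Your proposal is correct and follows essentially the same route as the paper: both form the difference, cancel using $p_{21}=1-p_{12}$ and $p_{43}=1-p_{34}$, arrive at the factorization $p_{12}(2p_{34}-1)(T_A-T_B)$ (your $p_{34}-p_{43}$ equals $2p_{34}-1$), and conclude from $T_A\geq T_B$ and $p_{34}\geq \tfrac12$. One small note: your subtracted term $p_{34}T_B+p_{43}T_A$ agrees with Table \ref{tab:table3} and with the paper's final factored expression, whereas the appendix's displayed subtrahend $p_{34}T_A+p_{43}T_B$ appears to be a typo.
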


The proof of theorem \ref{alwayswin} can be found in appendix $A.2$. Next, we will show the analytical results for all $W$.

\subsection{Analytical Results}

We assume that $v_1 = 1$, $v_2$, $v_3$, $v_4$ are in $(0,1)$, and $v_2 \geq v_3 \geq v_4$. We let $v_2$ be the $x$ axis, $v_3$ be the $y$ axis, $v_4$ be the $z$ axis. The region in the plots is the set $\{V=(1,v_2,v_3,v_4) | P_{\pi=(0,1,1,1)}^{(1)}(3,W,V)-P_{\pi=(1,1,1,1)}^{(1)}(3,W,V) \geq 0, v_2 \geq v_3 \geq v_4\}$ for given $W$. Here are the results.

For $W=[2,2,0,0]$, $[2,1,1,0]$, $[1,1,2,0]$, $[1,1,1,1]$, $[1,0,2,1]$, $[0,2,1,1]$, $[0,1,2,1]$, $[0,0,2,2]$, since we have completed the analysis of the situation where $W=[2,2,0,0]$ by theorem \ref{alwayswin}, the analysis of rest seven situations of $W$ is similar to the $W=[2,2,0,0]$ example. After some calculation, we can know that for every $V$, $P_{\pi=(0,1,1,1)}^{(1)}(3,W,V)-P_{\pi=(1,1,1,1)}^{(1)}(3,W,V) \leq 0$. Thus, it is always sensible for $a_1$ to try to win the last match if these eight situations happen.

For $W=[2,1,0,1]$, $[2,0,1,1]$, $[1,1,0,2]$, $[1,0,1,2]$, by the similar method, we find that $P_{\pi=(0,1,1,1)}^{(1)}(3,W,V)-P_{\pi=(1,1,1,1)}^{(1)}(3,W,V) \geq 0$ holds for every $V$. We can conclude that the region in figure \ref{fig:f6} is the whole region, i.e. $\{V=(1,v_2,v_3,v_4) | v_2 \geq v_3 \geq v_4\}$. Thus, it is always sensible for $a_1$ to lose the last match intentionally if these four situations happen.

\begin{figure}[h]
\begin{center}
\includegraphics[width=1.5in,height=1.5in]{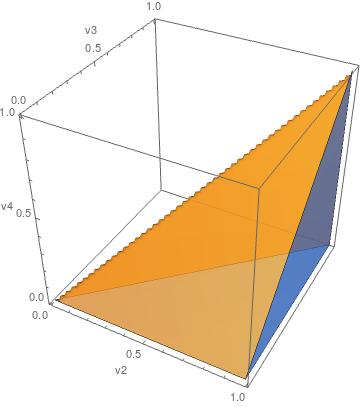}
\caption{the 3d region plot for the difference of two values in $[2,1,0,1]$, $[2,0,1,1]$, $[1,1,0,2]$, $[1,0,1,2]$}\label{fig:f6}
\end{center}
\end{figure}

For $[1,2,0,1]$ and $[0,1,1,2]$, similarly, we can find that the strategy for $a_1$ to decide whether to try to win or lose intentionally in the last match depends on the team weight vector $V=(1,v_2,v_3,v_4)$. Different team weight leads to different strategy. 

\begin{thm} \label{1201t}
Under a specific team weight vector $V=(1,v_2,v_3,v_4)$, $W=[1,2,0,1]$ or $[0,1,1,2]$, then $a_1$ should try to win if and only if the following inequality holds. Otherwise, $a_1$ should lose intentionally.
\begin{align*} \label{1201}
& 3v_2v_3v_4^2+2v_2v_4^3+6v_2^2v_3^2v_4+v_2v_3^2v_4+v_3^3v_4+2v_3v_4^3+10v_3^2v_4^2-3v_2^2v_3^3 \\ & -
v_2^2v_3v_4^2-2v_2^2v_4^3-v_2v_3^4-3v_2v_3^3v_4-2v_2v_3^2v_4^2-v_3^4v_4-6v_3^3v_4^2-6v_3^2v_4^3 \leq 0
\end{align*}
\end{thm}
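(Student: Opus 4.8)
The plan is to reduce the choice between the two pure actions to the sign of a single rational function, and then to the polynomial in the statement. First I would read off from the last-week schedule in Figure~\ref{fig:sch1} that the two games of week~$3$ are $a_1$ versus $a_2$ and $a_3$ versus $a_4$, exactly as in the worked example $W=[2,2,0,0]$. Starting from $W=[1,2,0,1]$ and supposing $a_1$ tries to win, I would enumerate the four joint outcomes of these two games as in Table~\ref{tab:table2}, record the resulting win vector, and apply the tie-breaking assumption to turn each win vector into a distribution over the tournament types $A,B,C$ (classified, as for Theorem~\ref{thm:ordertourn}, by whether $a_1$'s first-round opponent is $a_4$, $a_3$, or $a_2$). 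The four outcomes give $[2,2,1,1]\mapsto\tfrac12 T_A+\tfrac12 T_B$, both $[2,2,0,2]$ and $[1,3,1,1]\mapsto\tfrac13(T_A+T_B+T_C)$, and $[1,3,0,2]\mapsto T_A$; the two outcomes under intentional losing (the analogue of Table~\ref{tab:table3}) give $[1,3,1,1]\mapsto\tfrac13(T_A+T_B+T_C)$ and $[1,3,0,2]\mapsto T_A$.

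Assembling these with the game probabilities and writing $P_{\mathrm{win}}$, $P_{\mathrm{lose}}$ for $P^{(1)}_{\pi=(1,1,1,1)}$ and $P^{(1)}_{\pi=(0,1,1,1)}$ yields
\[
P_{\mathrm{win}} = p_{12}p_{34}\tfrac{T_A+T_B}{2} + (p_{12}p_{43}+p_{21}p_{34})\tfrac{T_A+T_B+T_C}{3} + p_{21}p_{43}T_A
\]
and $P_{\mathrm{lose}} = p_{34}\tfrac{T_A+T_B+T_C}{3} + p_{43}T_A$. Forming $D:=P_{\mathrm{lose}}-P_{\mathrm{win}}$ and using $p_{12}+p_{21}=1$ and $p_{34}+p_{43}=1$, the terms carrying $\tfrac13(T_A+T_B+T_C)$ and the lone $T_A$ collapse, so that every survivor carries a factor $p_{12}$:
\[
D = p_{12}\left[\tfrac13(T_A+T_B+T_C)(p_{34}-p_{43}) + p_{43}T_A - \tfrac{p_{34}}{2}(T_A+T_B)\right].
\]
Since $p_{12}>0$, and since ``$a_1$ should try to win'' means exactly $P_{\mathrm{win}}\ge P_{\mathrm{lose}}$, i.e.\ $D\le 0$, the decision is governed by the bracketed expression $E$ alone.

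The final step is to substitute the winning probabilities computed for Theorem~\ref{thm:ordertourn}, namely $T_A=p_{14}(p_{23}p_{12}+p_{32}p_{13})$, $T_B=p_{13}(p_{24}p_{12}+p_{42}p_{14})$, $T_C=p_{12}(p_{34}p_{13}+p_{43}p_{14})$, together with $p_{ij}=v_i/(v_i+v_j)$ and $v_1=1$, place $E$ over a common denominator that is a product of the manifestly positive factors $(v_i+v_j)$, and verify that the numerator, after canceling the factor common to all terms, equals a positive multiple of the polynomial in the statement. Because the denominator is positive, $E\le 0$ if and only if that polynomial is $\le 0$, which is the claimed criterion. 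A quick consistency check is the specialization $v_3=v_4$: there $E$ reduces to $\tfrac14(T_A-T_B)\ge 0$ by Theorem~\ref{thm:ordertourn}, while the stated polynomial factors as $(1-v_3)\,v_3^{3}\,(6v_2+13v_3)\ge 0$, confirming both that $a_1$ should then lose and that the proportionality constant relating $E$ to the polynomial is positive. I would finish by repeating the enumeration for $W=[0,1,1,2]$, whose four outcomes $[1,1,2,2]$, $[1,1,1,3]$, $[0,2,2,2]$, $[0,2,1,3]$ carry the identical tournament distributions and game-probability weights, so the two performance vectors produce the same $D$ and hence the same inequality.

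The main obstacle is this last substitution-and-simplification: expanding the three products of rational functions, clearing $\prod_{i<j}(v_i+v_j)$, and collapsing the numerator to the exact polynomial in the statement while confirming the proportionality constant is strictly positive so that the direction of the inequality is preserved. This calculation is long and error-prone and is where I would rely on a computer-algebra check; everything before it is a finite enumeration of outcomes followed by the one-line factorization extracting $p_{12}$.
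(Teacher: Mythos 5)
Your reduction is, in substance, exactly the paper's: enumerate the four (resp.\ two) joint outcomes of the week-3 games, convert each resulting win vector into a mixture of $T_A,T_B,T_C$ via the coin-flip seeding rule, and factor $p_{12}$ out of the difference of the two win probabilities. Your $D=p_{12}E$ agrees with the paper's displayed identity
$P^{(1)}_{(1,1,1,1)}-P^{(1)}_{(0,1,1,1)}=p_{12}\bigl(-\tfrac23T_A+\tfrac13T_B+\tfrac13T_C+p_{34}(\tfrac56T_A-\tfrac16T_B-\tfrac23T_C)\bigr)$, and the paper then rewrites $E\le 0$ as $p_{34}\ge\frac{4T_A-2T_B-2T_C}{5T_A-T_B-4T_C}$ (legitimate since $5T_A-T_B-4T_C=(T_A-T_B)+4(T_A-T_C)\ge 0$) before expanding into the polynomial. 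Up to that rational criterion your argument is correct and identical to the paper's, including the observation that $W=[0,1,1,2]$ yields the same weights.

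The genuine problem is your consistency check, and it matters because it is the only evidence you offer for the final, deferred identification with the stated polynomial. When $v_3=v_4$ the teams $a_3$ and $a_4$ are interchangeable, so tournaments $A$ and $B$ coincide and $T_A=T_B$ \emph{exactly}; a direct computation gives $T_A-T_B$ proportional to $v_2^2(v_3-v_4)$, not to $v_3^2(v_2-v_4)$ as misprinted in the paper's proof of Theorem \ref{thm:ordertourn}. Hence on that locus $E=\tfrac14(T_A-T_B)=0$ and $a_1$ is indifferent. Your factorization of the stated polynomial there, $v_3^3(1-v_3)(6v_2+13v_3)$, is correct and is strictly positive for $0<v_3<1$. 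A positive multiple of $0$ is $0$, so this does not ``confirm that the proportionality constant is positive''---it proves that the stated polynomial cannot be a positive rational multiple of the cleared numerator of $E$, and the computer-algebra verification you plan to run, as you have described it, must fail. (Spot checks at interior points with $v_3>v_4$ do show the stated polynomial carrying the sign the theorem requires, so the discrepancy may come from a spurious factor or from the paper's erroneous formula for $T_B$ propagating into its expansion; but a proof must either exhibit an exact identity or establish sign agreement on the whole region $v_2\ge v_3\ge v_4$, and your proposal does neither.)
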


The proof of theorem \ref{1201t} can be found in appendix $A.3$. The plot of this region is figure \ref{fig:f7}, which is made by Mathematica. 

\begin{figure}[h]
\begin{center}
\includegraphics[width=1.5in,height=1.5in]{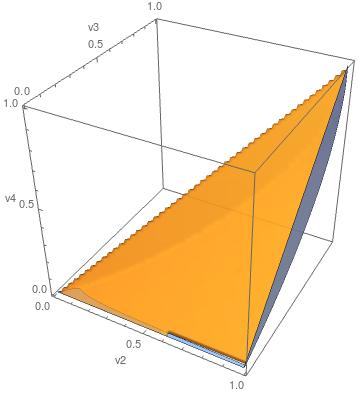}
\caption{the 3d region plot for the difference of two values in $[1,2,0,1]$ and $[0,1,1,2]$}\label{fig:f7}
\end{center}
\end{figure}

For $[1,2,1,0]$, we can find that the strategy for $a_1$ to decide whether to try to win or lose intentionally in the last match depends on the team weight vector $V=(1,v_2,v_3,v_4)$. Different team weight leads to different strategy. Next theorem will show the relationship between the strategy and vector $V$:

\begin{thm} \label{1210t}
Under a specific team weight vector $V=(1,v_2,v_3,v_4)$, $W=[1,2,1,0]$, then $a_1$ should try to win if and only if the following inequality holds. Otherwise, $a_1$ should lose intentionally.
\begin{align*} 
& 3v_2^2v_3^2v_4+v_2v_3^4+v_2v_3^3v_4+v_3^3v_4^2+4v_3^3v_4+4v_3^2v_4^2+5v_2v_3v_4 \\ & -4v_2^2v_3v_4^2-5v_2^2v_4^3-4v_3^4v_4-2v_2v_3v_4^2-v_3^2v_4-v_3v_4^2 \leq 0
\end{align*}
\end{thm}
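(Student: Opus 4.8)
The plan is to follow exactly the template of the $W=[2,2,0,0]$ analysis behind Theorem \ref{alwayswin}, but now the cascade of cases produces brackets of all three types and forces ties to be resolved by the coin-flip Assumption. First I would justify that it suffices to compare only the two pure actions $\pi=(1,1,1,1)$ and $\pi=(0,1,1,1)$: writing $\pi_1=\alpha$ makes $a_1$ win its last-week game against $a_2$ with probability $\alpha p_{12}$ and lose it with probability $1-\alpha p_{12}$, so $P^{(1)}_{\pi=(\alpha,1,1,1)}(3,[1,2,1,0],V)$ is affine in $\alpha$ and its maximum over $[0,1]$ is attained at an endpoint. Hence $a_1$ should try to win if and only if $P^{(1)}_{(1,1,1,1)}\ge P^{(1)}_{(0,1,1,1)}$.

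Next I would enumerate the four joint outcomes of the two last-week games $a_1$ vs $a_2$ and $a_3$ vs $a_4$ when $a_1$ tries to win, tabulating the final win vector, resolving ties via the Assumption, and classifying each resulting bracket as $A$, $B$, or $C$ according to whether $a_1$'s first-round opponent is $a_4$, $a_3$, or $a_2$. The outcomes with clean rankings are immediate: $(a_1\text{ loses},a_3\text{ wins})$ gives $[1,3,2,0]$, a bracket of type $B$. The delicate cases carry multi-way ties: $[2,2,2,0]$ (from $a_1$ wins, $a_3$ wins) leaves $a_1,a_2,a_3$ tied for seeds $1,2,3$ with $a_4$ fixed at seed $4$, and $[1,3,1,1]$ (from $a_1$ loses, $a_4$ wins) fixes $a_2$ at seed $1$ with $a_1,a_3,a_4$ tied for seeds $2,3,4$; in both, a count over the six equally likely seed permutations shows each type $A,B,C$ occurs with probability $\tfrac13$, giving conditional win probability $\tfrac13(T_A+T_B+T_C)$. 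The remaining case $[2,2,1,1]$ (from $a_1$ wins, $a_4$ wins) splits evenly into $A$ and $B$, giving $\tfrac12(T_A+T_B)$. Collecting terms yields $P^{(1)}_{(1,1,1,1)}$, and the same enumeration with $a_1$ losing its game for certain gives $P^{(1)}_{(0,1,1,1)}=p_{34}T_B+\tfrac{p_{43}}{3}(T_A+T_B+T_C)$.

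Then I would form the difference. Using $p_{12}+p_{21}=1$ and $p_{34}+p_{43}=1$, the $p_{21}$-terms cancel against $P^{(1)}_{(0,1,1,1)}$ and the whole expression factors as
\[
P^{(1)}_{(1,1,1,1)}-P^{(1)}_{(0,1,1,1)}=p_{12}\Bigl[\tfrac{p_{34}}{3}(T_A+T_C-2T_B)+\tfrac{p_{43}}{6}(T_A+T_B-2T_C)\Bigr].
\]
Since $p_{12}>0$, the sign of this difference, and hence the entire decision, is governed by the bracketed linear combination of $T_A,T_B,T_C$. Note that $T_A+T_B-2T_C\ge 0$ always by Theorem \ref{thm:ordertourn}, while $T_A+T_C-2T_B=(T_A-T_B)-(T_B-T_C)$ can have either sign, which is precisely why the optimal action depends on $V$ here rather than being constant as in Theorem \ref{alwayswin}.

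Finally I would substitute the closed-form bracket probabilities $T_A=p_{14}(p_{23}p_{12}+p_{32}p_{13})$, $T_B=p_{13}(p_{24}p_{12}+p_{42}p_{14})$, $T_C=p_{12}(p_{34}p_{13}+p_{43}p_{14})$ with $p_{ij}=v_i/(v_i+v_j)$, set $v_1=1$, place everything over the common strictly positive denominator, and read off the numerator. The claim is that after this reduction the bracketed quantity is a positive multiple of the negative of the degree-five polynomial in the statement, so that ``$a_1$ should try to win'' (a nonnegative difference) is equivalent to that polynomial being $\le 0$. The main obstacle is this last step: the substitution produces a large rational expression, and tracking the cancellations to recover the exact monomials, coefficients, and overall sign of the stated polynomial is where essentially all the work and all the risk of arithmetic error lie. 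I would organize it by clearing the denominators $(1+v_i)$ and $(v_3+v_4)$ symbolically, as Figure \ref{fig:f7} suggests the authors did with a computer algebra system, and then verify the collected polynomial term by term.
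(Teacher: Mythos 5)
Your proposal is correct and follows essentially the same route as the paper: the same four-outcome enumeration with coin-flip tie-breaking yields exactly the paper's collected expression $p_{12}\left(\frac{1}{6}T_A+\frac{1}{6}T_B-\frac{1}{3}T_C+p_{34}\left(\frac{1}{6}T_A-\frac{5}{6}T_B+\frac{2}{3}T_C\right)\right)$, and your assignment of $\frac{1}{3}(T_A+T_B+T_C)$ to the $[2,2,2,0]$ outcome and $\frac{1}{2}(T_A+T_B)$ to $[2,2,1,1]$ is the correct one --- it is what the paper's final line actually uses, even though the paper's displayed intermediate step attaches these two quantities to the opposite branches. The only remaining step, expanding the resulting threshold condition on $p_{34}$ into the stated degree-five polynomial, is asserted rather than carried out both in your write-up and in the paper, so you are exactly as complete as the original.
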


The proof of theorem \ref{1210t} can be found in appendix $A.4$. The plot of this region is in figure \ref{fig:f8}, which is made by Mathematica.

\begin{figure}[h]
\begin{center}
\includegraphics[width=1.5in,height=1.5in]{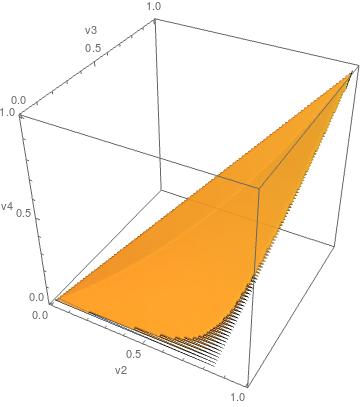}
\caption{the 3d region plot for the difference of two values in $[1,2,1,0]$}\label{fig:f8}
\end{center}
\end{figure}

\subsection{Analysis for the Second Week}
After completing the analysis of the last week, we want to analyze for the second week. Our idea is that given a winning vector $W^{*}$ at the beginning of the second week and the team weight vector $V$, if team $a_1$ tries to win, then four situations may happen on the second week: $a_1$ defeats $a_3$, $a_2$ defeats $a_4$ with probability $p_{13}p_{24}$; $a_1$ defeats $a_3$, $a_4$ defeats $a_2$ with probability $p_{13}p_{42}$; $a_3$ defeats $a_1$, $a_2$ defeats $a_4$ with probability $p_{31}p_{24}$; $a_3$ defeats $a_1$, $a_4$ defeats $a_2$ with probability $p_{31}p_{42}$. Assume these four situations bring $W^{*}$ to $W_1$, $W_2$, $W_3$, $W_4$ respectively, since we have completed the analysis of the last week, then we can get

\begin{align*} 
P_{\pi=(1,1,1,1)}^{(1)}(2,W^{*},V)&=p_{13}p_{24}\max\{P_{\pi=(1,1,1,1)}^{(1)}(3,W_1,V),P_{\pi=(0,1,1,1)}^{(1)}(3,W_1,V)\} \\ & +p_{13}p_{42}\max\{P_{\pi=(1,1,1,1)}^{(1)}(3,W_2,V),P_{\pi=(0,1,1,1)}^{(1)}(3,W_2,V)\} \\ & +p_{31}p_{24}\max\{P_{\pi=(1,1,1,1)}^{(1)}(3,W_3,V),P_{\pi=(0,1,1,1)}^{(1)}(3,W_3,V)\} \\ & +p_{31}p_{42}\max\{P_{\pi=(1,1,1,1)}^{(1)}(3,W_4,V),P_{\pi=(0,1,1,1)}^{(1)}(3,W_4,V)\}
\end{align*}

If $a_1$ loses intentionally in the second week, then two situations may happen on the second week: $a_3$ defeats $a_1$, $a_2$ defeats $a_4$ with probability $p_{24}$; $a_3$ defeats $a_1$, $a_4$ defeats $a_2$ with probability $p_{42}$. These two situations bring $W^{*}$ to $W_3$, $W_4$ respectively, then we can get

\begin{align*} 
P_{\pi=(0,1,1,1)}^{(1)}(2,W^{*},V)&=p_{24}\max\{P_{\pi=(1,1,1,1)}^{(1)}(3,W_3,V),P_{\pi=(0,1,1,1)}^{(1)}(3,W_3,V)\} \\ & +p_{42}\max\{P_{\pi=(1,1,1,1)}^{(1)}(3,W_4,V),P_{\pi=(0,1,1,1)}^{(1)}(3,W_4,V)\}
\end{align*}

\begin{thm} \label{secondwin}
Suppose the winning vector is $W^{*}$ at the beginning of the second week, then if
\begin{equation}
P_{\pi=(1,1,1,1)}^{(1)}(2,W^{*},V) \geq P_{\pi=(0,1,1,1)}^{(1)}(2,W^{*},V)
\end{equation}
$a_1$ should try to win in the second week. Otherwise, $a_1$ should lose intentionally. 
\end{thm}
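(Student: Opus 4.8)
The plan is to reduce the statement to an elementary fact about affine functions: the probability $P^{(1)}_{\pi=(\alpha,1,1,1)}(2,W^{*},V)$, viewed as a function of $a_1$'s mixing parameter $\alpha\in[0,1]$, is affine, so its maximum over $[0,1]$ is attained at one of the two endpoints $\alpha=0$ or $\alpha=1$, and the larger endpoint value dictates the optimal (pure) action. The theorem's decision rule then reads off immediately.

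First I would fix $W^{*}$ and $V$ and record exactly how $\alpha$ enters the second-week probability. Since $a_2$, $a_3$, $a_4$ all play to win ($\pi_2=\pi_3=\pi_4=1$), the only game influenced by $\alpha$ is $a_1$ versus $a_3$: with probability $\alpha$ team $a_1$ plays to win (winning that game with conditional probability $p_{13}$) and with probability $1-\alpha$ it throws the game (winning with probability $0$). Hence the unconditional probability that $a_1$ beats $a_3$ equals $\alpha p_{13}$, that $a_3$ beats $a_1$ equals $1-\alpha p_{13}$, while the $a_2$ versus $a_4$ split ($p_{24}$ against $p_{42}$) is untouched. This is the one place requiring care: the mixing parameter enters the single game $a_1$ plays, and nowhere else.

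Next I would introduce, for each of the four second-week outcomes leading to the states $W_1,W_2,W_3,W_4$, the continuation value
\[
V_k \;=\; \max\bigl\{P^{(1)}_{\pi=(1,1,1,1)}(3,W_k,V),\, P^{(1)}_{\pi=(0,1,1,1)}(3,W_k,V)\bigr\},
\]
which is the value of $a_1$ playing optimally in the last week; the $\max$ is legitimate because the completed last-week analysis (and the same affine reasoning applied at the leaf level) shows a pure action is optimal there. The crucial observation is that each $V_k$ depends only on the state $W_k$ and on $V$, hence is independent of $\alpha$. Combining this with the previous step gives
\[
P^{(1)}_{\pi=(\alpha,1,1,1)}(2,W^{*},V) = \alpha p_{13}\bigl(p_{24}V_1 + p_{42}V_2\bigr) + (1-\alpha p_{13})\bigl(p_{24}V_3 + p_{42}V_4\bigr),
\]
which rearranges into the convex combination $(1-\alpha)\,P^{(1)}_{\pi=(0,1,1,1)}(2,W^{*},V) + \alpha\,P^{(1)}_{\pi=(1,1,1,1)}(2,W^{*},V)$, confirming the affine dependence and simultaneously reproducing the two endpoint expressions already displayed in the text.

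Finally, since an affine function on $[0,1]$ attains its maximum at an endpoint, $\sup_{\alpha\in[0,1]}P^{(1)}_{\pi=(\alpha,1,1,1)}(2,W^{*},V)=\max\{P^{(1)}_{\pi=(0,1,1,1)}(2,W^{*},V),\,P^{(1)}_{\pi=(1,1,1,1)}(2,W^{*},V)\}$, with maximizer $\alpha=1$ whenever $P^{(1)}_{\pi=(1,1,1,1)}(2,W^{*},V)\ge P^{(1)}_{\pi=(0,1,1,1)}(2,W^{*},V)$ and $\alpha=0$ otherwise. This is precisely the claimed rule, and it also shows the optimal second-week strategy is pure. I do not anticipate a genuine obstacle: the only point needing attention is the justification that the continuation values $V_k$ carry no $\alpha$-dependence, so that the backward-induction value factors as stated; once that is in hand, the conclusion follows from linearity alone.
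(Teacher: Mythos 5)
Your proof is correct and follows the same backward-induction decomposition that the paper displays immediately before the theorem statement; the paper itself offers no explicit proof, treating the comparison of the two endpoint values as self-evident. Your additional observation that $P^{(1)}_{\pi=(\alpha,1,1,1)}(2,W^{*},V)$ is affine in $\alpha$ --- because the continuation values $V_k=\max\{P^{(1)}_{\pi=(1,1,1,1)}(3,W_k,V),P^{(1)}_{\pi=(0,1,1,1)}(3,W_k,V)\}$ carry no $\alpha$-dependence --- is precisely the justification needed to restrict attention to the two pure strategies, a point the paper only asserts informally in its introduction.
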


Next, we take $W^{*}=[1,1,0,0]$ as an example.

\subsubsection{Example: $W^{*}=[1,1,0,0]$}
Suppose that after the first week, $W^{*}=[1,1,0,0]$, then if $a_1$ tries to win in the second week, after the second week, $W{*}$ may become the following four vectors:

\begin{equation}
\left\{ 
	\begin{array}{lr}
	$[2,2,0,0]$, & \text{with probability $p_{13}p_{24}$} \\
	$[2,1,0,1]$, & \text{with probability $p_{13}p_{42}$}	 \\
	$[1,2,1,0]$, & \text{with probability $p_{31}p_{24}$} \\
	$[1,1,1,1]$, & \text{with probability $p_{31}p_{42}$}	 \\
	\end{array}
\right.
\end{equation} 

Since we have already found the best strategy under $W_1=[2,2,0,0]$, $W_2=[2,1,0,1]$, $W_3=[1,2,1,0]$, $W_4=[1,1,1,1]$, recall that $P_{\pi=(1,1,1,1)}^{1}(2,W^{*},V)$ is the probability for $a_1$ to win the champion under $W^{*}=[1,1,0,0]$ at the beginning of the second week if $a_1$ tries to win in the second week, then by theorem \ref{secondwin} , we can get 

\begin{align*} 
P_{\pi=(1,1,1,1)}^{(1)}(2,[1,1,0,0],V)&=p_{13}p_{24}\max\{P_{\pi=(1,1,1,1)}^{(1)}(3,[2,2,0,0],V),P_{\pi=(0,1,1,1)}^{(1)}(3,[2,2,0,0],V)\} \\ & +p_{13}p_{42}\max\{P_{\pi=(1,1,1,1)}^{(1)}(3,[2,1,0,1],V),P_{\pi=(0,1,1,1)}^{(1)}(3,[2,1,0,1],V)\} \\ & +p_{31}p_{24}\max\{P_{\pi=(1,1,1,1)}^{(1)}(3,[1,2,1,0],V),P_{\pi=(0,1,1,1)}^{(1)}(3,[1,2,1,0],V)\} \\ & +p_{31}p_{42}\max\{P_{\pi=(1,1,1,1)}^{(1)}(3,[1,1,1,1],V),P_{\pi=(0,1,1,1)}^{(1)}(3,[1,1,1,1],V)\}
\end{align*}

Similarly, recall that $P_{\pi=(0,1,1,1)}^{1}(2,W^{*},V)$ is the probability for $a_1$ to win the champion under $W^{*}=[1,1,0,0]$ at the beginning of the second week if $a_1$ loses intentionally in the second week, note the $W^{*}$ may become the following two vectors:

\begin{equation}
\left\{ 
	\begin{array}{lr}
	$[1,2,1,0]$, & \text{with probability $p_{24}$} \\
	$[1,1,1,1]$, & \text{with probability $p_{42}$}	 \\
	\end{array}
\right.
\end{equation} 

Then, $P_{\pi=(0,1,1,1)}^{(1)}(2,W^{*},V)$ can be calculated by theorem \ref{secondwin}.

\begin{align*} 
P_{\pi=(0,1,1,1)}^{(1)}(2,[1,1,0,0],V)&=p_{24}\max\{P_{\pi=(1,1,1,1)}^{(1)}(3,[1,2,1,0],V),P_{\pi=(0,1,1,1)}^{(1)}(3,[1,2,1,0],V)\} \\ & +p_{42}\max\{P_{\pi=(1,1,1,1)}^{(1)}(3,[1,1,1,1],V),P_{\pi=(0,1,1,1)}^{(1)}(3,[1,1,1,1],V)\}
\end{align*}

Now, it is natural to compare the difference between $P_{\pi=(1,1,1,1)}^{(1)}(2,[1,1,0,0],V)$ and $P_{\pi=(0,1,1,1)}^{(1)}(2,[1,1,0,0],V)$ to get the best strategy in the second week. We will show the result in the following subsection.

\subsubsection{Analytical Results}
We assume that $v_1 = 1$, $v_2$, $v_3$, $v_4$ are in $(0,1)$, and $v_2 \geq v_3 \geq v_4$. We let $v_2$ be the $x$ axis, $v_3$ be the $y$ axis, $v_4$ be the $z$ axis. It is worth mentioning that in this section, we only provide the plots by Mathematica for the set $\{V=(v_2,v_3,v_4) | P_{\pi=(0,1,1,1)}(2,W,V)-P_{\pi=(1,1,1,1)}(2,W,V) \geq 0\}$ for given $W$ instead of giving an analytical formula for the region, because the analytical formula is consisted of several extremely complicated polynomial, which is hard to write it out explicitly.

For $W=[1,1,0,0]$, the result is in figure \ref{fig:f9}. The region in the plot is the set 

$\{V=(v_2,v_3,v_4) | P_{\pi=(0,1,1,1)}(2,W,V)-P_{\pi=(1,1,1,1)}(2,W,V) \geq 0\}$ where $W=[1,1,0,0]$. 

\begin{figure}[h]
\begin{center}
\includegraphics[width=1.5in,height=1.5in]{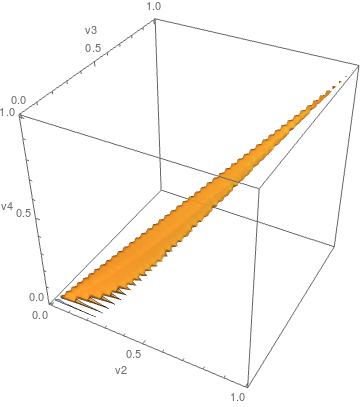}
\caption{values of $v_2$, $v_3$, $v_4$ such that $P_{\pi=(0,1,1,1)}(2,W,V)-P_{\pi=(1,1,1,1)}(2,W,V) \geq 0$ under $W=[1,1,0,0]$}\label{fig:f9}
\end{center}
\end{figure}

We can also get the region plot by Mathematica under $W=[1,0,1,0]$, $[0,1,0,1]$, $[0,0,1,1]$, the results are in figure \ref{fig:f99}.

\begin{figure}[h]
\includegraphics[width=1.5in,height=1.5in]{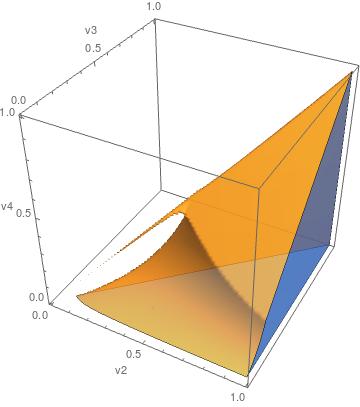}
\hspace{0.5in}
\includegraphics[width=1.5in,height=1.5in]{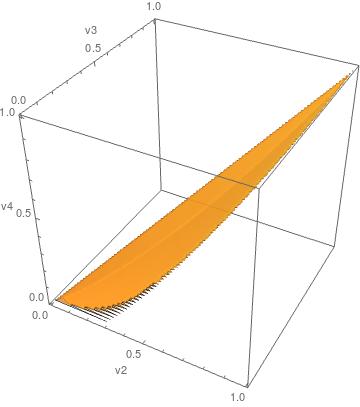}
\hspace{0.5in}
\includegraphics[width=1.5in,height=1.5in]{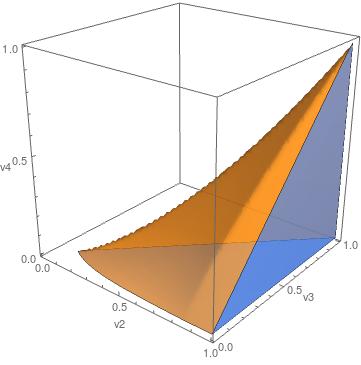}
\caption{Left:values of $v_2$, $v_3$, $v_4$ such that $P_{\pi=(0,1,1,1)}(2,W,V)-P_{\pi=(1,1,1,1)}(2,W,V) \geq 0$ under $W=[1,0,1,0]$. Middle:values of $v_2$, $v_3$, $v_4$ such that $P_{\pi=(0,1,1,1)}(2,W,V)-P_{\pi=(1,1,1,1)}(2,W,V) \geq 0$ under $W=[0,1,0,1]$. Right:values of $v_2$, $v_3$, $v_4$ such that $P_{\pi=(0,1,1,1)}(2,W,V)-P_{\pi=(1,1,1,1)}(2,W,V) \geq 0$ under $W=[0,0,1,1]$}\label{fig:f99}
\end{figure}

Thus, $a_1$ can decide to try to win or lose intentionally in the second week from the figure \ref{fig:f9}, \ref{fig:f99}. Now we move to analyze the strategy for $a_1$ in the first week!

\subsection{Analysis for the First Week}
After completing the analysis of the second week, we want to analyze for the first week. In the first week, if $a_1$ tries to win, winning vector $W$ may go to these four situations:

\begin{equation}
\left\{ 
	\begin{array}{lr}
	$[1,1,0,0]$, & \text{with probability $p_{14}p_{23}$} \\
	$[1,0,1,0]$, & \text{with probability $p_{14}p_{32}$}	 \\
	$[0,1,0,1]$, & \text{with probability $p_{41}p_{23}$} \\
	$[0,0,1,1]$, & \text{with probability $p_{41}p_{32}$}	 \\
	\end{array}
\right.
\end{equation} 

Since we have already found the best strategy under $[1,1,0,0]$, $[1,0,1,0]$, $[0,1,0,1]$, $[0,0,1,1]$, recall that $P_{\pi=(1,1,1,1)}^{1}(1,[0,0,0,0],V)$ is the probability for $a_1$ to win the champion at the beginning of the first week if $a_1$ tries to win, similar to the analysis for the second week, we can get: 

\begin{align*} 
P_{\pi=(1,1,1,1)}^{1}(1,[0,0,0,0],V)&=p_{14}p_{23}\max\{P_{\pi=(1,1,1,1)}^{(1)}(2,[1,1,0,0],V),P_{\pi=(0,1,1,1)}^{(1)}(2,[1,1,0,0],V)\} \\ & +p_{14}p_{32}\max\{P_{\pi=(1,1,1,1)}^{(1)}(2,[1,0,1,0],V),P_{\pi=(0,1,1,1)}^{(1)}(2,[1,0,1,1],V)\} \\ & +p_{41}p_{23}\max\{P_{\pi=(1,1,1,1)}^{(1)}(2,[0,1,0,1],V),P_{\pi=(0,1,1,1)}^{(1)}(2,[0,1,0,1],V)\} \\ & +p_{41}p_{32}\max\{P_{\pi=(1,1,1,1)}^{(1)}(2,[0,0,1,1],V),P_{\pi=(0,1,1,1)}^{(1)}(2,[0,0,1,1],V)\}
\end{align*}

If $a_1$ loses intentionally, winning vector $W$ may go to these two situations:

\begin{equation}
\left\{ 
	\begin{array}{lr}
	$[0,1,0,1]$, & \text{with probability $p_{23}$} \\
	$[0,0,1,1]$, & \text{with probability $p_{32}$}	 \\
	\end{array}
\right.
\end{equation} 

Then, $P_{\pi=(0,1,1,1)}^{(1)}(1,[0,0,0,0],V)$ can be written as

\begin{align*} 
P_{\pi=(0,1,1,1)}^{(1)}(1,[0,0,0,0],V)&=p_{23}\max\{P_{\pi=(1,1,1,1)}^{(1)}(2,[0,1,0,1],V),P_{\pi=(0,1,1,1)}^{(1)}(2,[0,1,0,1],V)\} \\ & +p_{32}\max\{P_{\pi=(1,1,1,1)}^{(1)}(2,[0,0,1,1],V),P_{\pi=(0,1,1,1)}^{(1)}(2,[0,0,1,1],V)\}
\end{align*}

Next we can compare the difference and due to the huge amount of calculation, the Mathematica cannot provide a nice 3d region plot, instead, we use Python to make the 3d scatter plot, which can also provide some information of the rough shape of the region. We let $v_2$ be the $x$ axis, $v_3$ be the $y$ axis, $v_4$ be the $z$ axis. The points in the plots is the set of point $\{V=(v_2,v_3,v_4) | P_{\pi=(0,1,1,1)}^{1}(1,[0,0,0,0],V)-P_{\pi=(1,1,1,1)}^{1}(1,[0,0,0,0],V) \geq 0\}$ for given $W$. Here is the result.

\begin{figure}[h]
\begin{center}
\includegraphics[width=3in,height=2.5in]{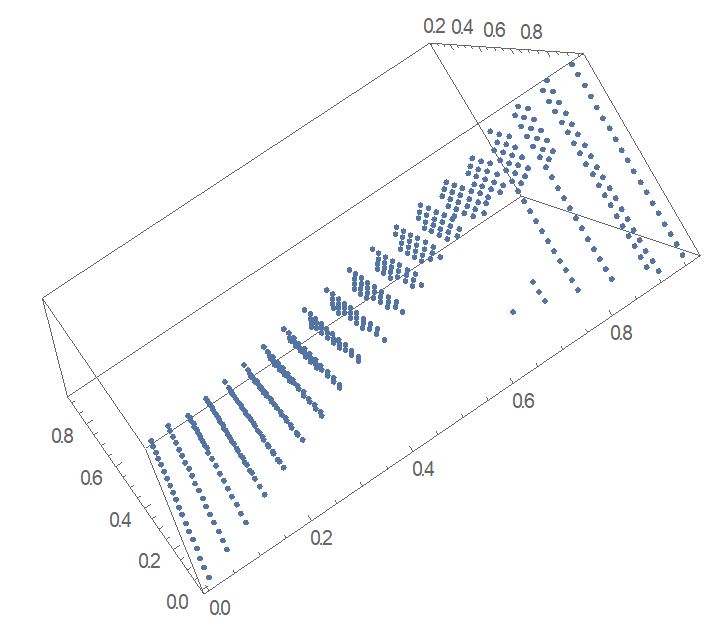}
\caption{the list points of $[v_2, v_3, v_4]$ when it is better for $a_1$ to lose intentionally in week $1$}\label{fig:10}
\end{center}
\end{figure}

Thus, $a_1$ can investigate the team weight of $a_2$, $a_3$, $a_4$ at the beginning of the first week. If their team weight is in figure \ref{fig:10}, $a_1$ should lose intentionally in the first week. Otherwise, $a_1$ should try to win. Hence, we have fully analyzed this FRNS model and in the next part we will analyze the FRS model.

\section{Analysis for the FRS Model}
\subsection{Description and Assumption of FRS Model}
In this part, all assumptions are as same as the model in previous section except for the strategies of the other three teams. Recall that in the FRNS model, we assume that other teams try to win every game, i.e. $\pi_2=\pi_3=\pi_4=1$. However, in this part, we assume that other three teams are smart enough to lose some games intentionally to maximize their winning probability of the tournament. Hence, the strategy for one particular game may not be pure strategy. Instead, every team may have a mixed strategy for each game. In this model, $\pi_i \in [0,1]$ for $i \in \{1,2,3,4\}$. Note that the boundary point $0$ represents the strategy to lose intentionally and $1$ represents the strategy to try to win. If $\pi_i \notin \{0,1\}$, it means team $a_i$ tries to win with probability $\pi_i$ and loses intentionally with probability $1-\pi_i$. We assume that while two teams $a_i$ and $a_j$ are playing a game, if both of them try to win, then $a_i$ has probability $p_{ij}=\frac{v_i}{v_i+v_j}$ to win. If one of them tries to win and the other loses intentionally, then we assume that the one who tries to win will have 100\% probability to win this game. If both of them loses intentionally, then the game is decided by flipping a fair coin. In the game theory problem with mixed strategy, we usually have to find the Nash equilibrium. The Nash equilibrium is a concept of game theory where the optimal outcome of a game is one where no player has an incentive to deviate from his chosen strategy after considering an opponent's choice. Hence, in this model, in week $m$, given winning vector $W$ and weight vector $V$, $\pi=(\pi_1,\pi_2,\pi_3,\pi_4)$ is a Nash equilibrium if for each $i \in \{1,2,3,4\}$, $P_\pi^{(i)}(m,W,V) \geq P_{\pi^{'}}^{(i)}(m,W,V)$ for $\forall \pi^{'}$ s.t. $\pi_j^{'}=\pi_j$,$j \neq i$.

\subsection{Example $W=[1,2,0,1]$}
\subsubsection{Analysis for $(\pi_1, \pi_2, \pi_3, \pi_4)$}
Similar with the analysis of FRNS model, we still first analyze the last game. Take $W=[1,2,0,1]$ as an example, recall that in our FRNS model, we denote $T_A$, $T_B$, $T_C$ as the probability for $a_1$ to win the champion in tournament $A$, $B$, $C$ respectively. In this section, we define $T_{ij}$ as the probability team $a_i$ to win the champion in the tournament $j$, where $i \in \{1,2,3,4\}$, $j \in \{A,B,C\}$. By Theorem \ref{thm:ordertourn}, we know that $T_{1A} \geq T_{1B} \geq T_{1C}$ when $v_1 \geq v_2 \geq v_3 \geq v_4$. Since in our FRS model, we not only analyze the strategy of $a_1$, but also analyze the strategies of $a_2$, $a_3$, $a_4$. Next, we will introduce a lemma to show the relationship between $T_{ij}$.

\begin{lemA} \label{T1}
If $v_1 \geq v_2 \geq v_3 \geq v_4$, then
\begin{equation}
T_{1A} \geq T_{1B} \geq T_{1C}
\end{equation}
\begin{equation}
T_{2B} \geq T_{2A} \geq T_{2C}
\end{equation}
\begin{equation}
T_{3C} \geq T_{3A} \geq T_{3B}
\end{equation}
\begin{equation}
T_{4C} \geq T_{4B} \geq T_{4A}
\end{equation}
\end{lemA}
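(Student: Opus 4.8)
The plan is to deduce all four displayed chains from a single elementary comparison, which I will call the \emph{swapping principle}, so that the eight adjacent inequalities become instances of one computation. The first chain $T_{1A}\ge T_{1B}\ge T_{1C}$ is already Theorem~\ref{thm:ordertourn}, so strictly only the chains for $a_2,a_3,a_4$ are new; but the same principle proves all four at once. The principle is this: fix a distinguished team of weight $x$, a \emph{stationary} team of weight $s$, and two further teams of weights $p\le q$ (all positive), and compare two four-team single-elimination brackets that differ only in the distinguished team's first-round opponent. In bracket $(1)$ it meets the lighter team $p$ while the other semifinal pits $q$ against $s$; in bracket $(2)$ it meets the heavier team $q$ while the other semifinal pits $p$ against $s$. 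Writing $T^{(1)},T^{(2)}$ for the probability the distinguished team wins the tournament, the claim is $T^{(1)}\ge T^{(2)}$.

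Granting this, each adjacent inequality is obtained by reading off, in the two brackets being compared, the team common to both opposite semifinals (this plays the role of $s$), the two teams that change sides (these are $p\le q$), and the distinguished team (this is $x$); in every case the preferred bracket is the one in which the distinguished team's opponent is the lighter of the swapped pair. For instance $T_{2B}\ge T_{2A}$ comes from $x=v_2,\ s=v_1,\ (p,q)=(v_4,v_3)$, and $T_{2A}\ge T_{2C}$ from $x=v_2,\ s=v_4,\ (p,q)=(v_3,v_1)$; the chains for $a_3$ and $a_4$ are matched in exactly the same fashion, always letting the stationary team be the one shared by the two opposite semifinals.

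To prove the principle, write each winning probability as the probability of winning the semifinal times the probability of beating the random finalist emerging from the other semifinal:
\[
T^{(1)}=\frac{x}{x+p}\left[\frac{q}{q+s}\cdot\frac{x}{x+q}+\frac{s}{q+s}\cdot\frac{x}{x+s}\right],\qquad
T^{(2)}=\frac{x}{x+q}\left[\frac{p}{p+s}\cdot\frac{x}{x+p}+\frac{s}{p+s}\cdot\frac{x}{x+s}\right].
\]
Putting each bracket over a common denominator collapses these to
\[
T^{(1)}=\frac{x^2\bigl[x(q+s)+2qs\bigr]}{(x+p)(x+q)(x+s)(q+s)},\qquad
T^{(2)}=\frac{x^2\bigl[x(p+s)+2ps\bigr]}{(x+p)(x+q)(x+s)(p+s)},
\]
so both share the positive prefactor $x^2/[(x+p)(x+q)(x+s)]$. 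Since $\tfrac{x(q+s)+2qs}{q+s}=x+\tfrac{2qs}{q+s}$, the $x$-terms cancel in the difference and
\[
T^{(1)}-T^{(2)}=\frac{x^2}{(x+p)(x+q)(x+s)}\left(\frac{2qs}{q+s}-\frac{2ps}{p+s}\right)
=\frac{x^2}{(x+p)(x+q)(x+s)}\cdot\frac{2s^2\,(q-p)}{(q+s)(p+s)}\ge 0,
\]
the final inequality holding because $p\le q$; equivalently $t\mapsto ts/(t+s)$ is increasing, as its derivative $s^2/(t+s)^2$ is positive.

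I expect the genuine work to be bookkeeping rather than algebra: one must verify for each of the eight adjacent comparisons that the two brackets really do differ by a single opponent swap against a common stationary team, and that the lighter swapped team always sits opposite the distinguished team in the higher-probability bracket. Once the probabilities are written in the product form above, the dependence on the first-round opponent factors cleanly out and the sign is immediate, so no case-by-case polynomial manipulation is needed.
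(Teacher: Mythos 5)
Your proof is correct, and it takes a genuinely different route from the paper's. The paper proves the lemma by brute force: it writes out all twelve quantities $T_{ij}$ in closed form and then, for each of the six new adjacent comparisons, expands and factors the difference of two rational expressions until a manifestly nonnegative factor such as $2v_3^2(v_2-v_4)$ appears. You instead isolate the one structural fact underlying all eight comparisons --- any two of the three brackets differ, from the distinguished team's viewpoint, by swapping its first-round opponent with one of the two teams in the opposite semifinal, the third team $s$ staying put --- and prove a single swap lemma whose sign reduces to the monotonicity of $t\mapsto ts/(t+s)$. I checked your parameter assignments for all eight adjacent inequalities (including the two already covered by Theorem~\ref{thm:ordertourn}) and they are all consistent with the bracket definitions $A:(1\,\text{v}\,4,\ 2\,\text{v}\,3)$, $B:(1\,\text{v}\,3,\ 2\,\text{v}\,4)$, $C:(1\,\text{v}\,2,\ 3\,\text{v}\,4)$; moreover, since there are only four teams, \emph{every} pair of brackets is automatically a single swap, so the bookkeeping you worry about is vacuous. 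What your approach buys is robustness and insight: each comparison needs only the ordering of the two swapped weights rather than the full chain $v_1\ge v_2\ge v_3\ge v_4$, the equality case ($p=q$ or $s=0$) is read off instantly, and you avoid exactly the kind of transcription slip that the paper's computation in fact contains (in the displays for $T_{2B}-T_{2A}$ and several later differences, the common prefactor silently mutates from $\frac{v_2^2}{(v_1+v_2)(v_2+v_3)(v_2+v_4)}$ to $\frac{v_1^2}{(v_1+v_4)(v_1+v_2)(v_1+v_3)}$ and a factor such as $(v_1+v_3)(v_1+v_4)$ is dropped from the denominator, though the final factored form is right). What the paper's approach buys is the explicit closed forms for the $T_{ij}$, which it reuses later in the FRS analysis (e.g.\ in evaluating $\frac{4T_{2A}-2T_{2B}-2T_{2C}}{5T_{2A}-T_{2B}-4T_{2C}}$); your argument would still need those formulas to be written down separately for that purpose, but for the lemma itself your proof is cleaner and strictly more general.
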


The proof of lemma \ref{T1} can be found in the appendix $A.5$. We just simply calculate all these variables and find the difference between each of them. 

In addition, we assume that in week $3$, $a_1$ vs $a_2$, $a_3$ vs $a_4$ happen simultaneously. Thus, before playing the game, any team cannot know the result of the other game. We introduce two variables: $A_{12}$, $A_{34}$, which represent the real probability for $a_1$ defeats $a_2$, and $a_3$ defeats $a_4$. The real probability of $A_{12}$ is the sum of winning probability for $a_1$ under four situations: $a_1$ tries to win 
and $a_2$ tries to win, $a_1$ tries to win and $a_2$ loses intentionally, $a_1$ loses intentionally and $a_2$ tries to win, $a_1$ loses intentionally and $a_2$ loses intentionally. Recall that in this section, we still assume that if both team lose intentionally, we can flip a fair coin to decide who wins the game. Thus, the formula of $A_{12}$ is: 
\begin{equation}
A_{12}=v_{12}\pi_1\pi_2+\pi_1(1-\pi_2)+\frac{1}{2}(1-\pi_1)(1-\pi_2)
\end{equation}
Similarly, 
\begin{equation}
A_{34}=v_{34}\pi_3\pi_4+\pi_3(1-\pi_4)+\frac{1}{2}(1-\pi_3)(1-\pi_4)
\end{equation}

Next, we define the payoff function as the probability to win the champion under given strategy. Define $Q_{ij}$, $i \in \{1,2,3,4\}$, $j \in \{a,b,c,d\}$ as the probability for $a_i$ to win the tournament under condition $j$, such that $Q_{1a}$, $Q_{2a}$, represents the probability for $a_1$, $a_2$ to win the champion if both $a_1$ and $a_2$ tries to win, respectively. $Q_{1b}$, $Q_{2b}$, represents the probability for $a_1$, $a_2$ to win the champion if $a_1$ loses intentionally and $a_2$ tries to win, respectively. $Q_{1c}$, $Q_{2c}$, represents the probability for $a_1$, $a_2$ to win the champion if $a_1$ tries to win and $a_2$ loses intentionally, respectively. $Q_{1d}$, $Q_{2d}$, represents the probability for $a_1$, $a_2$ to win the champion if both $a_1$ and $a_2$ loses intentionally, respectively. Similarly, we define $Q_{3a}$, $Q_{4a}$ as the probability for $a_3$, $a_4$ to win the champion if both $a_3$ and $a_4$ tries to win, respectively. $Q_{3b}$, $Q_{4b}$, represents the probability for $a_3$, $a_4$ to win the champion if $a_3$ loses intentionally and $a_4$ tries to win, respectively. $Q_{3c}$, $Q_{4c}$, represents the probability for $a_3$, $a_4$ to win the champion if $a_3$ tries to win and $a_4$ loses intentionally, respectively. $Q_{3d}$, $Q_{4d}$, represents the probability for $a_3$, $a_4$ to win the champion if both $a_3$ and $a_4$ loses intentionally, respectively.

To calculate the payoff functions, we show some examples. For example, to calculate $Q_{1a}$, we notice that if $a_3$ defeats $a_4$, then $W=[1,2,1,1]$. If both $a_1$ and $a_2$ tries to win and if $a_1$ wins, then $W=[2,2,1,1]$, recall our flipping coin assumption, $a_1$ has $\frac{1}{2} (T_{1A}+T_{1B} )$ probability to win the champion. If $a_2$ wins, then $W=[1,3,1,1]$, $a_1$ has $\frac{1}{3} (T_{1A}+T_{1B}+T_{1C} )$ probability to win the champion. Otherwise, if $a_4$ defeats $a_3$, then $W=[1,2,0,2]$, then if $a_1$ wins, $W=[2,2,0,2]$, $a_1$ has $\frac{1}{3} (T_{1A}+T_{1B}+T_{1C} )$ probability to win the champion. If $a_2$ wins, $W=[1,3,0,2]$, $a_1$ will enter tournament $A$, and has $T_{1A}$ probability to win the champion. Thus, 
\begin{equation}
Q_{1a}=A_{34}(v_{12}\frac{1}{2} (T_{1A}+T_{1B} )+(1-v_{12} )\frac{1}{3} (T_{1A}+T_{1B}+T_{1C} ))+(1-A_{34} )(v_{12}\frac{1}{3}((T_{1A}+T_{1B}+T_{1C} )+(1-v_{12})T_{1A})
\end{equation}

Notice that if $i=1,2$, $Q_{ij}$ is a function with parameter $v_1$, $v_2$, $v_3$, $v_4$, $A_{34}$. Since $A_{34}$ is a function with parameter $\pi_3$, $\pi_4$, 
\begin{equation}
Q_{ij} = F_{ij}(v_1, v_2, v_3, v_4, \pi_3, \pi_4)
\end{equation}
Similarly, if $i = 3,4$, 
\begin{equation}
Q_{ij} = F_{ij}(v_1, v_2, v_3, v_4, \pi_1, \pi_2)
\end{equation}
\begin{figure}
\begin{center}
\includegraphics[width=2in,height=2in]{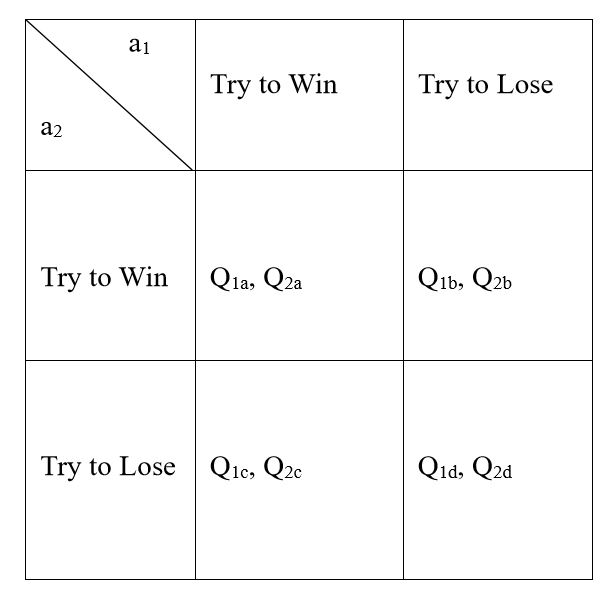}
\hspace{0.5in}
\includegraphics[width=2in,height=2in]{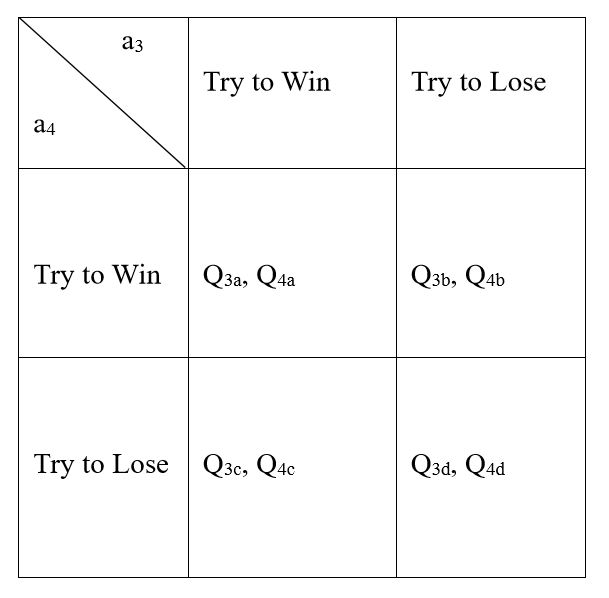}
\caption{The game theory table for four teams}\label{fig:game1}
\end{center}
\end{figure}

Now, we want to introduce our method to calculate $\pi=(\pi_1,\pi_2,\pi_3,\pi_4)$. In our algorithm, the main logic is to write the probability for each team to win the champion as a function with variable $\pi$ and $Q_{ij}$. By the definition of the Nash equilibrium, if all teams own mixed strategy, then the partial derivatives of probability for team $a_i$ to win the champion with respect to the variable $\pi_i$ all equal to $0$. If not, the teams will have pure strategy. Thus, we calculate the partial derivative of the probability for each team to win the champion respect to the team $i$'s strategy $\pi_i$ and get the solutions of $\pi$. We will expand the analysis of how to solve $\pi$ now. Recall that in our main logic, firstly, we have to find the probability of each team to win the champion. Let $E_1$, $E_2$, $E_3$, $E_4$ be probability of team $a_1$, $a_2$, $a_3$, $a_4$ to win the tournament, respectively. Recall that $Q_{ij}$, $i \in \{1,2,3,4\}$, $j \in \{a,b,c,d\}$ is the probability for $a_i$ to win the champion under condition $j$. Define $P(j)$ as the probability of event $j$. Here for team $a_i$, we have 
\begin{equation} \label{esum}
E_i=\sum_{j \in \{a,b,c,d\}}Q_{ij}P(j)
\end{equation}

For example, recall that $Q_{1a}$, $Q_{1b}$, $Q_{1c}$, $Q_{1d}$ represents the probability for $a_1$ to win the tournament if both $a_1$ and $a_2$ tries to win, if $a_1$ loses intentionally and $a_2$ tries to win, if $a_1$ tries to win and $a_2$ loses intentionally, and if both $a_1$ and $a_2$ loses intentionally, respectively. Then by equation \ref{esum}, we have $E_1=\pi_1\pi_2Q_{1a}+(1-\pi_1)\pi_2Q_{1b}+\pi_1(1-\pi_2)Q_{1c}+(1-\pi_1)(1-\pi_2)Q_{1d}$. Similarly, we can write the following equation system.

\begin{equation} \label{expectation}
\left\{ \begin{array}{lr}  E_1=\pi_1\pi_2Q_{1a}+(1-\pi_1)\pi_2Q_{1b}+\pi_1(1-\pi_2)Q_{1c}+(1-\pi_1)(1-\pi_2)Q_{1d}, & \\ E_2=\pi_1\pi_2Q_{2a}+(1-\pi_1)\pi_2Q_{2b}+\pi_1(1-\pi_2)Q_{2c}+(1-\pi_1)(1-\pi_2)Q_{2d}, \\ E_3=\pi_3\pi_4Q_{3a}+(1-\pi_3)\pi_4Q_{3b}+\pi_3(1-\pi_4)Q_{3c}+(1-\pi_3)(1-\pi_4)Q_{3d}, \\ E_4=\pi_3\pi_4Q_{4a}+(1-\pi_3)\pi_4Q_{4b}+\pi_3(1-\pi_4)Q_{4c}+(1-\pi_3)(1-\pi_4)Q_{4d}, &    \end{array}    \right.  \end{equation}

Secondly, we want to see how each team's strategy effects their winning probability. We calculate the partial derivative for each probability in equation system \ref{expectation} with respect to the variable $\pi_i$, recall that $Q_{1j}$, $Q_{2j}$ only depend on $\pi_3$ and $\pi_4$, $Q_{3j}$, $Q_{4j}$ only depend on $\pi_1$ and $\pi_2$, then, we can get 
\begin{equation} \label{pderivE}
\left\{ \begin{array}{lr}
\frac{\partial E_1}{\partial \pi_1}=\pi_2(Q_{1a}-Q_{1b}-Q_{1c}+Q_{1d})+Q_{1c}-Q_{1d}, & \\  \frac{\partial E_2}{\partial \pi_2}=\pi_1(Q_{2a}-Q_{2b}-Q_{2c}+Q_{2d})+Q_{2b}-Q_{2d}, \\ \frac{\partial E_3}{\partial \pi_3}=\pi_4(Q_{3a}-Q_{3b}-Q_{3c}+Q_{3d})+Q_{3c}-Q_{3d}, \\ \frac{\partial E_4}{\partial \pi_4}=\pi_3(Q_{4a}-Q_{4b}-Q_{4c}+Q_{4d} )+Q_{4b}-Q_{4d},  &   \end{array}    \right.  
\end{equation}

From equation system \ref{pderivE}, we notice that $\frac{\partial E_i}{\partial \pi_i}$ is linearly related to the strategy variable $\pi_j$, where $a_j$ is the rival of $a_i$ in the last week. Hence, we can draw the following conclusion:

\begin{thm}
If $\frac{\partial E_i}{\partial \pi_i} \neq 0$, then team $a_i$ should make a pure strategy. Moreover, if $\frac{\partial E_i}{\partial \pi_i} > 0$, team $a_i$ should try to win. If $\frac{\partial E_i}{\partial \pi_i} < 0$, team $a_i$ should lose intentionally.
\end{thm}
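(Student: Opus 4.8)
The plan is to exploit the affine (degree-one) dependence of each team's winning probability on its own strategy variable. The starting point is the observation, already visible in the equation system \ref{expectation}, that for $i \in \{1,2\}$ the payoff coefficients $Q_{ia}, Q_{ib}, Q_{ic}, Q_{id}$ are functions of $v_1,v_2,v_3,v_4,\pi_3,\pi_4$ only, and symmetrically for $i \in \{3,4\}$ they depend only on $v_1,v_2,v_3,v_4,\pi_1,\pi_2$. In particular, none of the $Q_{ij}$ appearing in the expression for $E_i$ depends on $\pi_i$ itself. Consequently $E_i$, written out as in \ref{expectation}, is an affine function of $\pi_i$ once the strategies of the other three teams are held fixed.

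First I would make this linearity explicit by writing $E_i = \pi_i S_i + C_i$, where the slope $S_i = \frac{\partial E_i}{\partial \pi_i}$ and the intercept $C_i$ both depend only on the opponents' strategies and the weights, not on $\pi_i$. This is precisely the content of equation system \ref{pderivE}: each $\frac{\partial E_i}{\partial \pi_i}$ is a constant with respect to $\pi_i$ (indeed it is linear only in the single rival variable $\pi_j$, where $a_j$ is the opponent of $a_i$ in the last week). Thus the quantity $\frac{\partial E_i}{\partial \pi_i}$ is well defined as the slope of $E_i$ viewed as a function of $\pi_i$ on $[0,1]$.

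The core of the argument is then elementary optimization of an affine function over the interval $[0,1]$. By the definition of a best response, and hence of the Nash equilibrium condition stated earlier, team $a_i$'s optimal action is the value $\pi_i \in [0,1]$ that maximizes $E_i$ with the other strategies fixed. Since $E_i$ is affine in $\pi_i$ with slope $S_i = \frac{\partial E_i}{\partial \pi_i}$: if $S_i > 0$ then $E_i$ is strictly increasing, so the unique maximizer is the endpoint $\pi_i = 1$, that is, $a_i$ should try to win; if $S_i < 0$ then $E_i$ is strictly decreasing, so the unique maximizer is $\pi_i = 0$, that is, $a_i$ should lose intentionally. In either case $S_i \neq 0$ forces the optimal strategy to an endpoint of $[0,1]$, i.e.\ a pure strategy, which is exactly the claim. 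Only when $S_i = 0$ is $E_i$ constant in $\pi_i$, leaving $a_i$ indifferent and permitting a mixed best response; this is the degenerate case that later connects to the condition $v_1 = v_2$, $v_3 = v_4$ in Theorem \ref{frs}.

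There is no serious analytic obstacle here, as the theorem is a direct corollary of linearity. The only point demanding care is the justification that $\frac{\partial E_i}{\partial \pi_i}$ genuinely carries no $\pi_i$-dependence, which rests on the functional forms $Q_{ij} = F_{ij}(v_1,v_2,v_3,v_4,\pi_3,\pi_4)$ for $i \in \{1,2\}$ and $Q_{ij} = F_{ij}(v_1,v_2,v_3,v_4,\pi_1,\pi_2)$ for $i \in \{3,4\}$ established above; once that structural fact is granted, the maximization over $[0,1]$ is immediate and the three cases follow by inspection of the sign of the slope.
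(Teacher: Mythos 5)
Your proposal is correct and follows essentially the same route as the paper: both arguments rest on the fact that $E_i$ is affine in $\pi_i$ (because the $Q_{ij}$ in its expansion do not depend on $\pi_i$), so a nonzero slope forces the maximizer to an endpoint of $[0,1]$. Your write-up is somewhat more careful in justifying why the slope carries no $\pi_i$-dependence, but the underlying idea is identical.
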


\begin{proof}
Notice that if $\frac{\partial E_i}{\partial \pi_i} > 0$, then $\frac{\partial E_i}{\partial \pi_i}$ is a linear function with positive slope. We get the maximum of $E_i$ if $\pi_i$ achieves its maximum, which is $1$. Similarly, if $\frac{\partial E_i}{\partial \pi_i} < 0$, then we get the maximum of $E_i$ if $\pi_i$ achieves its minimum, which is $0$. 
\end{proof}

Now, we focus on finding the Nash equilibrium. We first apply the Nash's Existence Theorem to show the existence of the Nash equilibrium $\pi$.

\begin{thm} [Nash's Existence Theorem] \label{exis}
\citep{MR46638} If we allow mixed strategies (where a pure strategy is chosen at random, subject to some fixed probability), then every game with a finite number of players in which each player can choose from finitely many pure strategies has at least one Nash equilibrium.
\end{thm}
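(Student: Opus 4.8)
The plan is to derive the statement from a topological fixed-point theorem applied to the best-response correspondence on the space of mixed-strategy profiles, following Nash's own strategy. First I would set up the strategy space: for each of the finitely many players $i$, the set of mixed strategies over player $i$'s finitely many pure strategies is a probability simplex $\Delta_i$, which is a compact, convex subset of a Euclidean space. The space of all mixed-strategy profiles is then the product $\Delta = \prod_i \Delta_i$, which inherits compactness and convexity. The crucial structural fact to record here is that each player's expected payoff $u_i(\sigma)$ (the analogue of the $E_i$ used later in this paper) is multilinear in the components of $\sigma$; in particular it is continuous on $\Delta$ and \emph{affine} in player $i$'s own mixed strategy $\sigma_i$ when the opponents' strategies $\sigma_{-i}$ are held fixed.

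Next I would introduce the best-response correspondence. For a profile $\sigma = (\sigma_i, \sigma_{-i})$, define $BR_i(\sigma_{-i}) = \argmax_{\tau_i \in \Delta_i} u_i(\tau_i, \sigma_{-i})$ and set $BR(\sigma) = \prod_i BR_i(\sigma_{-i})$, a correspondence $BR : \Delta \rightrightarrows \Delta$. The point of this definition is that, exactly as in the Nash-equilibrium condition stated in this paper, $\sigma$ is a Nash equilibrium if and only if no player can strictly improve by a unilateral deviation, which holds if and only if $\sigma \in BR(\sigma)$. Thus the entire problem reduces to showing that $BR$ admits a fixed point.

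I would then verify the three hypotheses of Kakutani's fixed-point theorem. (i) \emph{Nonempty values:} each $BR_i(\sigma_{-i})$ is the set of maximizers of a continuous function over the compact set $\Delta_i$, hence nonempty by the extreme value theorem. (ii) \emph{Convex values:} since $u_i$ is affine in $\sigma_i$, its maximizers over the simplex form a face of $\Delta_i$, which is convex; the product $BR(\sigma)$ is then convex as a product of convex sets. (iii) \emph{Closed graph / upper hemicontinuity:} this follows from continuity of the payoffs, for instance via Berge's Maximum Theorem, or can be checked directly by a sequence argument. With these properties in hand, together with the compactness and convexity of $\Delta$, Kakutani's theorem produces a profile $\sigma^\ast \in BR(\sigma^\ast)$, which by the equivalence above is precisely a Nash equilibrium.

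The main obstacle I anticipate is establishing the closed-graph property in step (iii) rigorously: this is the one place where the continuous variation of payoffs is genuinely used, and one must rule out the maximizer set jumping discontinuously as $\sigma_{-i}$ varies. A clean way to sidestep correspondences entirely is Nash's alternative Brouwer-based argument, which I would fall back on if only Brouwer's theorem may be assumed: for each player $i$ and pure strategy $s$ set the gain $g_{i,s}(\sigma) = \max\{0,\, u_i(s, \sigma_{-i}) - u_i(\sigma)\}$, and define the self-map $f(\sigma)_{i,s} = (\sigma_i(s) + g_{i,s}(\sigma)) / (1 + \sum_{s'} g_{i,s'}(\sigma))$. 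This $f$ is continuous and maps $\Delta$ into itself, so Brouwer's theorem yields a fixed point, and a short algebraic computation shows that at a fixed point every gain $g_{i,s}$ must vanish, which is exactly the no-profitable-deviation condition defining a Nash equilibrium. I would select whichever of the two fixed-point theorems the intended reader is assumed to know.
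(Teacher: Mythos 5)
Your argument is correct, but note that the paper does not prove this statement at all: it is quoted as a known result with a citation to Nash's original work, so there is no internal proof to compare against. What you have written is essentially the classical argument from that source --- the Kakutani fixed-point route applied to the best-response correspondence on the product of mixed-strategy simplices appears in Nash's 1950 note, and the Brouwer-based self-map built from the gain functions $g_{i,s}$ is exactly the argument of the 1951 paper the authors cite. Both versions are complete as sketched; the only step you flag as delicate, the closed-graph property of $BR$, does follow routinely from the multilinearity (hence joint continuity) of the payoffs, so either branch of your plan closes without difficulty. In the context of this paper you would not be expected to reprove the theorem, only to verify that its hypotheses (finitely many players, two pure strategies ``win'' and ``lose'' per team per game) hold, which the authors do immediately after the statement.
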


In our problem setting, the number of players is finite, each player has two pure strategies: win or lose. Hence, we know that the Nash equilibrium exists in our problem setting. The next proposition shows that the teams can have mixed strategies if and only if all partial derivatives of winning probabilities equals to zero. 

\begin{prop} \label{propp}
For all $m$, $W$, and possible $V$, $\pi=(\pi_1,\pi_2,\pi_3,\pi_4)$ is the Nash equilibrium if and only if
\begin{equation} \left\{ \begin{array}{lr}  \frac{\partial E_1}{\partial \pi_1}=0, \\ \frac{\partial E_2}{\partial \pi_2}=0, \\\frac{\partial E_3}{\partial \pi_3}=0, \\ \frac{\partial E_4}{\partial \pi_4}=0,   &   \end{array}    \right.  \end{equation} \label{par}
\end{prop}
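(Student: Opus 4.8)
The plan is to exploit the affine structure of each payoff $E_i$ in its own strategy variable $\pi_i$, which is already exposed by equation system \ref{pderivE}. The decisive observation is that $\frac{\partial E_i}{\partial \pi_i}$ contains no occurrence of $\pi_i$ itself: from \ref{expectation} each $E_i$ is bilinear in the pair $(\pi_i,\pi_j)$, where $a_j$ is the last-week opponent of $a_i$, and the coefficients $Q_{i\cdot}$ depend only on the strategies of the other pair. Hence, for fixed opponent strategies, $E_i$ is an \emph{affine} function of $\pi_i$ on $[0,1]$ whose slope $s_i:=\frac{\partial E_i}{\partial \pi_i}$ is a constant. I would open the proof by recording this fact together with the resulting best-response rule already proved in the theorem preceding this proposition: the best response of team $a_i$ is $\pi_i=1$ when $s_i>0$, $\pi_i=0$ when $s_i<0$, and any $\pi_i\in[0,1]$ when $s_i=0$.

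For the ``if'' direction, suppose all four partials vanish. Then each $E_i$ is literally constant in $\pi_i$ over $[0,1]$ (since the slope is $\pi_i$-free), so every value of $\pi_i$ is a best response to the remaining coordinates; by the definition of Nash equilibrium given earlier, $\pi$ then satisfies the equilibrium condition, and in particular an interior (mixed) equilibrium is available. For the ``only if'' direction, suppose $\pi$ is a mixed equilibrium, i.e. $\pi_i\in(0,1)$ for every $i$. If some $s_i\neq 0$, the best-response rule forces the unique optimizer to a boundary point of $\{0,1\}$, so team $a_i$ has a strictly profitable unilateral deviation away from $\pi_i\in(0,1)$, contradicting the equilibrium property. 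Hence $s_i=0$ for all $i$, which is exactly the displayed system.

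The main subtlety, and the only place the argument needs care, is the logical reading of the equivalence. As the lead-in discussion makes clear, the intended content concerns genuinely mixed profiles $\pi\in(0,1)^4$: the forward implication uses interiority essentially, since a pure equilibrium can perfectly well have nonzero slopes. I would therefore carry out the ``only if'' direction for $\pi\in(0,1)^4$, invoking Theorem \ref{exis} to guarantee that an equilibrium exists at all, while the vanishing-slope system pins down precisely those equilibria that are interior. The remaining work is pure bookkeeping: verifying from \ref{pderivE} that each slope is indeed independent of $\pi_i$ and that this holds uniformly across all $m$, $W$, and admissible $V$, which is automatic because the tournament payoff is always bilinear in a paired team's two strategies regardless of the current state.
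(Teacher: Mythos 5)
Your argument is correct, and in fact the paper offers no proof of this proposition at all --- it is stated after the derivation of equation system \ref{pderivE} and then used immediately to derive the system \ref{prob}, so there is nothing in the text to compare your proof against. Your two directions are exactly what is needed: the ``if'' direction follows because $\frac{\partial E_i}{\partial \pi_i}$ contains no $\pi_i$, so a vanishing slope makes $E_i$ constant in $\pi_i$ on all of $[0,1]$ and every $\pi_i$ is a best response; the ``only if'' direction follows because an interior $\pi_i$ with nonzero slope admits a strictly improving move toward the appropriate endpoint. You are also right to flag the logical reading as the one substantive issue: taken literally, the ``only if'' direction of the proposition is false, since the paper's own Theorem \ref{concl} asserts that pure equilibria (with generically nonzero slopes) are the norm. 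The surrounding prose (``the teams can have mixed strategies if and only if all partial derivatives \dots equal to zero'') confirms that the intended claim is about equilibria with $\pi\in(0,1)^4$, and your restriction of the forward implication to interior profiles is the correct repair. Two minor remarks: the appeal to Theorem \ref{exis} is not actually needed anywhere in your argument (existence of an equilibrium is irrelevant to the stated equivalence), and the claim of uniformity over all $m$ and $W$ deserves one more sentence, since for earlier weeks the coefficients $Q_{ij}$ must be replaced by continuation values obtained from the later-week equilibria; the bilinearity in the two paired strategies persists, but that is a property of the week-$m$ stage game with those continuation values plugged in, not something the paper's week-$3$ formulas give you for free.
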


Recall that we have mentioned that at this moment, we do not know the value of $\pi=(\pi_1,\pi_2,\pi_3,\pi_4)$, hence we solve the equation system in proposition \ref{par} without expanding $A_{12}$ and $A_{34}$, we can get 

\begin{equation} \label{prob}
\left\{ \begin{array}{lr}  A_{34}=\frac{4T_{1A}-2T_{1B}-2T_{1C}}{5T_{1A}-T_{1B}-4T_{1C}}, \\ A_{34}=\frac{4T_{2A}-2T_{2B}-2T_{2C}}{5T_{2A}-T_{2B}-4T_{2C}}, \\ A_{12}=\frac{4T_{3A}-2T_{3C}-2T_{3B}}{5T_{3A}-T_{3B}-4T_{3C}}, \\ A_{12}=\frac{4T_{4A}-2T_{4C}-2T_{4B}}{5T_{4A}-T_{4B}-4T_{4C}},  &   \end{array}    \right.  \end{equation}  

From equation system \ref{prob}, we know that the Nash equilibrium follows 
\begin{equation} \label{e1}
A_{34}=\frac{4T_{1A}-2T_{1B}-2T_{1C}}{5T_{1A}-T_{1B}-4T_{1C}}=\frac{4T_{2A}-2T_{2B}-2T_{2C}}{5T_{2A}-T_{2B}-4T_{2C}}
\end{equation}
\begin{equation} \label{e2}
A_{12}=\frac{4T_{3A}-2T_{3C}-2T_{3B}}{5T_{3A}-T_{3B}-4T_{3C}}=\frac{4T_{4A}-2T_{4C}-2T_{4B}}{5T_{4A}-T_{4B}-4T_{4C}}
\end{equation}

We have to solve these two equations. We know that the strategy is related to all team weights. Next, we claim the following theorem:

\begin{thm}\label{solution}
The only solution of \ref{e1}, \ref{e2} is that $v_1=v_2=1$, $v_3=v_4\leq 1$.
\end{thm}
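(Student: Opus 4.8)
The plan is to turn both displayed equalities into polynomial conditions on the weights and then factor. Since $p_{ij}=v_i/(v_i+v_j)$ is homogeneous of degree $0$, every $T_{ij}$ and hence the whole system is scale invariant, so I may normalize $v_1=1$ and work in the region $1=v_1\ge v_2\ge v_3\ge v_4>0$. Writing $f(i)=\frac{4T_{iA}-2T_{iB}-2T_{iC}}{5T_{iA}-T_{iB}-4T_{iC}}$ (for $i=3,4$ the listed numerators $4T_{iA}-2T_{iC}-2T_{iB}$ coincide with this), the equalities imposed by \ref{e1} and \ref{e2} on the weights are exactly $f(1)=f(2)$ and $f(3)=f(4)$, the remaining equalities merely fixing the values $A_{34}$ and $A_{12}$. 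First I would write the twelve numbers $T_{ij}$ (already computed in the proof of Lemma \ref{T1}) as explicit rational functions of $v_1,v_2,v_3,v_4$, using that the first-round pairings are $\{a_1,a_4\},\{a_2,a_3\}$ in $A$, $\{a_1,a_3\},\{a_2,a_4\}$ in $B$, and $\{a_1,a_2\},\{a_3,a_4\}$ in $C$; each $T_{ij}$ is then a ratio whose denominator is a product of pairwise sums, e.g. $T_{1A}=\frac{v_1}{v_1+v_4}\left(\frac{v_2}{v_2+v_3}\frac{v_1}{v_1+v_2}+\frac{v_3}{v_2+v_3}\frac{v_1}{v_1+v_3}\right)$.

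The sufficiency direction is quick and motivates the factorization. When $v_3=v_4$ the interchange $a_3\leftrightarrow a_4$ carries tournament $A$ to tournament $B$ while leaving the experiences of $a_1$ and $a_2$ unchanged, so $T_{1A}=T_{1B}$ and $T_{2A}=T_{2B}$; substituting these collapses each of $f(1),f(2)$ to $\frac{2T_{1A}-2T_{1C}}{4T_{1A}-4T_{1C}}=\frac12$, so \ref{e1} holds. Symmetrically, when $v_1=v_2$ the interchange $a_1\leftrightarrow a_2$ sends $A$ to $B$ fixing $a_3,a_4$, giving $T_{3A}=T_{3B}$ and $T_{4A}=T_{4B}$ and hence $f(3)=f(4)=\frac12$, so \ref{e2} holds. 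Thus the entire family $v_1=v_2=1$, $v_3=v_4\le1$ solves the system.

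For necessity I would clear denominators and let $N_1$ be the numerator of $f(1)-f(2)$ and $N_2$ that of $f(3)-f(4)$, so that in the region \ref{e1}$\iff N_1=0$ and \ref{e2}$\iff N_2=0$. Because $N_1$ vanishes identically on the hyperplane $v_3=v_4$ (the denominators being positive there) and $v_3-v_4$ is irreducible, the factor theorem gives $v_3-v_4\mid N_1$; likewise $v_1-v_2\mid N_2$. Write $N_1=(v_3-v_4)G_1$ and $N_2=(v_1-v_2)G_2$. It is worth checking that $v_1-v_2$ does \emph{not} divide $N_1$: evaluating $f(1)-f(2)$ along $v_1=v_2$ with $a=T_{1A}$, $b=T_{1B}$, $c=T_{1C}$ produces a numerator proportional to $(a-b)(a+b-2c)$, which is not identically zero, so \ref{e1} genuinely detects $v_3=v_4$ and not $v_1=v_2$ (and symmetrically for \ref{e2}). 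Consequently the theorem follows once $G_1$ and $G_2$ are shown to be nonzero on the region: then $N_1=0$ forces $v_3=v_4$, $N_2=0$ forces $v_1=v_2$, and with $v_1=1$ this is precisely $v_2=1$, $v_3=v_4\le1$.

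The main obstacle is this last step, the non-vanishing of the cofactors $G_1,G_2$ on the ordered region. I would attack it through the substitution $v_1=v_4+s_1+s_2+s_3$, $v_2=v_4+s_2+s_3$, $v_3=v_4+s_3$ with $s_1,s_2,s_3\ge0$ and $v_4>0$, which parametrizes the region, and then expand $G_1$ and $G_2$ in $s_1,s_2,s_3,v_4$; the goal is to verify that every monomial coefficient carries the same sign (and they are not all zero), forcing $G_1$ and $G_2$ to be strictly of one sign and hence nonvanishing. This positivity check and the factorizations $N_1=(v_3-v_4)G_1$, $N_2=(v_1-v_2)G_2$ are mechanical and can be produced in Mathematica. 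Should some coefficient fail to be sign-definite, the fallback is to stop treating the two equations separately and instead intersect the varieties $N_1=0$ and $N_2=0$: any common solution with $v_3\ne v_4$ would force $G_1=0$, and such points would be excluded using the companion relation $N_2=0$ together with the strict orderings, again by a coefficient-sign argument on the resulting reduced polynomials.
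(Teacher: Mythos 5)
Your sufficiency direction is correct and cleanly argued: the swap $a_3\leftrightarrow a_4$ (resp.\ $a_1\leftrightarrow a_2$) interchanges tournaments $A$ and $B$ while fixing the other two teams, so $v_3=v_4$ gives $T_{1A}=T_{1B}$, $T_{2A}=T_{2B}$ and collapses both sides of \ref{e1} to $\tfrac12$, and symmetrically for \ref{e2}; this matches the computation the paper does later when it derives $A_{12}=A_{34}=\tfrac12$. The factorizations $N_1=(v_3-v_4)G_1$ and $N_2=(v_1-v_2)G_2$ also follow legitimately from that symmetry plus the factor theorem. The problem is the necessity direction, which is the entire content of the theorem: everything is made to rest on the claim that $G_1$ and $G_2$ are sign-definite on the ordered region, and you do not establish this. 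You propose a monomial-coefficient sign check after the substitution $v_1=v_4+s_1+s_2+s_3$, etc., explicitly defer it to Mathematica, and even supply a fallback ``should some coefficient fail to be sign-definite.'' That is a research plan, not a proof; the hard step is exactly the one left unverified, and the fallback (intersecting the varieties $N_1=0$, $N_2=0$) is sketched even more loosely. A smaller technical point: passing from \ref{e1} to $N_1=0$ requires the denominators $5T_{iA}-T_{iB}-4T_{iC}$ to be nonzero on the region, which you assert implicitly but do not check (for $i=2$ the ordering $T_{2B}\geq T_{2A}\geq T_{2C}$ does not immediately give a sign).

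For comparison, the paper avoids brute-force expansion in the $v_i$ entirely. It rewrites \ref{e1} as the cyclic identity $T_{1A}(T_{2B}-T_{2C})+T_{1B}(T_{2C}-T_{2A})+T_{1C}(T_{2A}-T_{2B})=0$ and applies Chebyshev's sum inequality, using the orderings of Lemma \ref{T1}, to force either total degeneracy ($T_{1A}=T_{1B}=T_{1C}$) or the convexity-type constraints \ref{e6}--\ref{e9} on the $T_{ij}$; these are then solved to get $v_1=v_2$, $v_3=v_4$. Working at the level of the $T_{ij}$ and their known orderings keeps the argument human-checkable, whereas your route trades that for a large symbolic computation whose outcome you have not confirmed. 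If you carry out and report the positivity check (or replace it with an argument at the level of the $T_{ij}$, e.g.\ using your own observation that along $v_1=v_2$ the numerator of $f(1)-f(2)$ is $6(T_{1A}-T_{1B})(T_{1A}+T_{1B}-2T_{1C})$), the proof would be complete; as written it is not.
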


By theorem \ref{solution}, we know that the only situation where all teams have mixed strategies is when $a_2$ is as strong as $a_1$, $a_3$ is as strong as $a_4$. Next, we will give a proof of theorem \ref{solution}.

\begin{proof}
Equation \ref{e1} implies that 
\begin{equation} \label{e3}
T_{1B}T_{2C}-T_{1B}T_{2A}+T_{1C}T_{2A}+T_{1A}T_{2B}-T_{1A}T_{2C}-T_{1C}T_{2B}=0
\end{equation}

We can write \ref{e3} as 
\begin{equation} \label{e4}
T_{1A}(T_{2B}-T_{2C})+T_{1B}(T_{2C}-T_{2A} )+T_{1C}(T_{2A}-T_{2B})=0
\end{equation}

To solve \ref{e4}, we have to introduce the Chebyshev's sum inequality \citep{MR0046395}

\begin{lemA}[Chebyshev's sum inequality]
If $a_1 \geq a_2 \geq ... \geq a_n$, $b_1 \geq b_2 \geq ... \geq b_n$, then 
\begin{equation}
\frac{1}{n}\sum_{k=1}^n a_kb_k \geq (\frac{1}{n}\sum_{k=1}^n a_k)(\frac{1}{n}\sum_{k=1}^n b_k)
\end{equation}
\end{lemA}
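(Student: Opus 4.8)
The plan is to deduce the inequality from a single symmetric double sum that is manifestly nonnegative. I would define
\begin{equation*}
S = \sum_{i=1}^{n}\sum_{j=1}^{n}(a_i-a_j)(b_i-b_j).
\end{equation*}
The first step, and the conceptual heart of the argument, is to observe that every summand is nonnegative. Because the two sequences are sorted in the same (decreasing) order, whenever $i \le j$ we have $a_i \ge a_j$ and $b_i \ge b_j$, so both factors are nonnegative; whenever $i > j$ both factors are nonpositive. In either case $(a_i-a_j)(b_i-b_j)\ge 0$, and therefore $S \ge 0$. This is the only place the hypothesis that the sequences are \emph{similarly ordered} is used, and it is precisely what would fail if one sequence were increasing and the other decreasing (which instead yields the reverse inequality).

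The second step is a routine expansion of $S$. Multiplying out and splitting the double sum into four pieces gives
\begin{align*}
S &= \sum_{i,j}a_i b_i - \sum_{i,j}a_i b_j - \sum_{i,j}a_j b_i + \sum_{i,j}a_j b_j \\
  &= 2n\sum_{k=1}^{n}a_k b_k - 2\Big(\sum_{k=1}^{n}a_k\Big)\Big(\sum_{k=1}^{n}b_k\Big),
\end{align*}
where I have used that $\sum_{i,j}a_i b_i = n\sum_k a_k b_k$ (the summand is independent of $j$), that $\sum_{i,j}a_i b_j = \big(\sum_i a_i\big)\big(\sum_j b_j\big)$, and the symmetric identities for the remaining two terms.

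Combining the two steps, $S \ge 0$ becomes $n\sum_{k}a_k b_k \ge \big(\sum_k a_k\big)\big(\sum_k b_k\big)$, and dividing both sides by $n^2$ yields the stated normalized form. I do not expect any genuine obstacle: the inequality is classical and the argument is elementary once the double sum is written down. The only point requiring care is the bookkeeping in the expansion, namely correctly collapsing each of the four double sums and tracking the factor of $n$, but this is mechanical. An alternative route would be to invoke the rearrangement inequality and average $\sum_k a_k b_{\sigma(k)}$ over the $n$ cyclic shifts $\sigma$, which also produces $\big(\sum a_k\big)\big(\sum b_k\big)$ on the right; I would prefer the self-contained double-sum identity above, since it requires no prior machinery and makes the role of the ordering hypothesis completely transparent.
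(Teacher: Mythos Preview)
Your argument is correct: the double sum $S=\sum_{i,j}(a_i-a_j)(b_i-b_j)$ is termwise nonnegative under the similar-ordering hypothesis, and the expansion you give is accurate, yielding $n\sum_k a_kb_k\ge(\sum_k a_k)(\sum_k b_k)$ after dividing by $2$, and the stated inequality after dividing by $n^2$.

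As for comparison with the paper: there is nothing to compare. The paper does not prove this lemma at all; it merely states Chebyshev's sum inequality with a citation and then applies it (in the $n=2$ case) inside the proof of Theorem~\ref{solution}. Your self-contained proof is the standard one and would serve perfectly well if the authors had wished to include it.
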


Recall lemma \ref{T1}, $T_{1A}\geq T_{1B}\geq T_{1C}$,$T_{2B}\geq T_{2A}\geq T_{2C}$, if $(T_{2C}-T_{2A})\geq (T_{2A}-T_{2B})$, then by Chebyshev's sum inequality, 
\begin{equation} \label{e5}
T_{1B}(T_{2C}-T_{2A} )+T_{1C}(T_{2A}-T_{2B}) \geq \frac{1}{2}(T_{1B}+T_{1C})(T_{2C}-T_{2A}+T_{2A}-T_{2B})=\frac{1}{2}(T_{1B}+T_{1C} )(T_{2C}-T_{2B})
\end{equation}

Applying \ref{e5} to \ref{e4}, we get 
\begin{equation}
0\geq T_{1A}(T_{2B}-T_{2C})+\frac{1}{2}(T_{1B}+T_{1C} )(T_{2C}-T_{2B})=(T_{1A}-\frac{T_{1B}+T_{1C}}{2})(T_{2B}-T_{2C})
\end{equation}

This only happens when $T_{1A}=T_{1B}=T_{1C}$. Otherwise, 
\begin{equation} \label{e6}
T_{2C}-T_{2A} \leq T_{2A}-T_{2B}
\end{equation}

We can also write equation \ref{e3} as $T_{2B}(T_{1A}-T_{1C})+T_{2A}(T_{1C}-T_{1B})+T_{2C}(T_{1B}-T_{1A})=0$. By applying Chebyshev's sum inequality, we know that if $T_{1C}-T_{1B} \geq  T_{1B}-T_{1A}$, the only possible solution is that $T_{2A}=T_{2B}=T_{2C}$. Otherwise,

\begin{equation} \label{e7}
T_{1C}-T_{1B} \leq T_{1B}-T_{1A}
\end{equation}

Similarly, we apply the same method to equation \ref{e2}. We get 
\begin{equation} \label{e8}
T_{4A}-T_{4B} \leq T_{4B}-T_{4C}
\end{equation}
\begin{equation} \label{e9}
T_{3B}-T_{3A} \leq T_{3A}-T_{3C}
\end{equation}

By solving \ref{e6}, \ref{e7}, \ref{e8}, \ref{e9}, we get that the only solution is that $v_1=v_2=1$, $v_3=v_4\leq 1$.
\end{proof}

Now, we know that if $v_1=v_2=1$, $v_3=v_4\leq 1$, every team has mixed strategy. Next, we want to find what the mixed strategy is. In this situation,
 
\begin{equation} \label{e10}
T_{1A}= T_{1B}= T_{2A}= T_{2B} \geq T_{1C}= T_{2C}
\end{equation}
\begin{equation} \label{e11}
T_{3C}= T_{4C}=1-T_{1C}
\end{equation}
\begin{equation} \label{e12}
T_{3A}= T_{3B}= T_{4A}= T_{4B}=1-T_{1A}
\end{equation}
We re-calculate $A_{12}$ and $A_{34}$ by plugging \ref{e10}, \ref{e11}, \ref{e12} to equation \ref{e1}, \ref{e2}, we can get 
\begin{equation} \label{e13}
A_{34}=\frac{4T_{1A}-2T_{1B}-2T_{1C}}{5T_{1A}-T_{1B}-4T_{1C}}=\frac{2T_{1A}-2T_{1C}}{4T_{1A}-4T_{1C}}=\frac{1}{2}
\end{equation}

Similarly, $A_{12} = \frac{1}{2}$. 

\begin{cor} \label{xyzw}
Under the situation where $v_1=v_2=1$, $v_3=v_4\leq 1$, the mixed strategy $(\pi_1,\pi_2,\pi_3,\pi_4)$ satisfies $\pi_1=\pi_2$, $\pi_3=\pi_4$
\end{cor}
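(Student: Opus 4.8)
The plan is to deduce the corollary directly from the two scalar identities $A_{12}=A_{34}=\tfrac12$ that were just established for the mixed equilibrium in the regime $v_1=v_2=1$, $v_3=v_4\le 1$ (equation \ref{e13} and the sentence following it). Since those identities pin down the effective match probabilities, all that remains is to invert the defining relations between $A_{12}$, $A_{34}$ and the strategy variables $\pi_i$.

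First I would substitute the symmetric weights into the definition of $A_{12}$. Because $v_1=v_2$ we have $v_{12}=\frac{v_1}{v_1+v_2}=\tfrac12$, and likewise $v_3=v_4$ gives $v_{34}=\tfrac12$. Expanding the defining formula
\[
A_{12}=v_{12}\pi_1\pi_2+\pi_1(1-\pi_2)+\tfrac12(1-\pi_1)(1-\pi_2)
\]
and collecting terms yields the identity
\[
A_{12}=\Bigl(v_{12}-\tfrac12\Bigr)\pi_1\pi_2+\tfrac12\pi_1-\tfrac12\pi_2+\tfrac12 .
\]
The decisive observation is that the coefficient of the bilinear term $\pi_1\pi_2$ is exactly $v_{12}-\tfrac12$, which vanishes under $v_{12}=\tfrac12$. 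Hence $A_{12}$ collapses to the affine expression $\tfrac12(1+\pi_1-\pi_2)$, and imposing $A_{12}=\tfrac12$ forces $\pi_1-\pi_2=0$, i.e. $\pi_1=\pi_2$.

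The same computation applied to $A_{34}$ with $v_{34}=\tfrac12$ gives $A_{34}=\tfrac12(1+\pi_3-\pi_4)$, so $A_{34}=\tfrac12$ yields $\pi_3=\pi_4$, completing the proof.

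I do not expect a genuine obstacle here: the argument is a one-line linear solve once the quadratic coupling is removed. The only point that requires care is recognizing that it is precisely the weight symmetry $v_{12}=v_{34}=\tfrac12$ that annihilates the $\pi_1\pi_2$ (respectively $\pi_3\pi_4$) term, thereby turning the equilibrium equations into affine equations with the unique symmetric solution; outside this symmetric regime the bilinear term survives and no such clean reduction holds, consistent with Theorem \ref{solution}, which excludes mixed equilibria there.
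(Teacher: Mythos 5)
Your proposal is correct and follows essentially the same route as the paper: expand the defining formula for $A_{12}$ (resp.\ $A_{34}$), observe that the weight symmetry $v_{12}=v_{34}=\tfrac12$ cancels the bilinear $\pi_i\pi_j$ term, and solve the resulting affine equation $A=\tfrac12(1+\pi_i-\pi_j)=\tfrac12$ to get $\pi_i=\pi_j$. Your write-up is in fact a bit cleaner than the paper's, whose displayed expansion contains a minor arithmetic slip but reaches the same cancellation and conclusion.
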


\begin{proof}
By expanding $A_{34}$, 
\begin{equation}
A_{34}=v_{34}\pi_3\pi_4+\pi_3(1-\pi_4)+\frac{1}{2}(1-\pi_3)(1-\pi_4)=\pi_3\pi_4+2\pi_3-2\pi_3\pi_4+\frac{1}{2}-\pi_3-\pi_4+\pi_3\pi_4=\frac{1}{2}
\end{equation}
We can get $\pi_3 = \pi_4$, similarly, by expanding $A_{12}$, we can also get $\pi_1 = \pi_2$.
\end{proof}

Next, we re-calculate the probability for each team to win the champion, $E_i$, $i=\{1,2,3,4\}$, under $\pi_1=\pi_2$, $\pi_3=\pi_4$. 

\begin{cor}
Under $\pi_1=\pi_2$, $\pi_3=\pi_4$, the probability for each team to win the champion, $E_i$, does not depend on $\pi=(\pi_1,\pi_2,\pi_3,\pi_4)$.
\end{cor}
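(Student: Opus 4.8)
The plan is to show that each winning probability $E_i$ depends on the strategy vector $\pi$ only through the two effective game-win probabilities $A_{12}$ and $A_{34}$, and then to invoke the fact (established in the proof of Corollary \ref{xyzw}) that both of these quantities are frozen at $\frac{1}{2}$ under the standing hypotheses $v_1=v_2$, $v_3=v_4$ together with $\pi_1=\pi_2$, $\pi_3=\pi_4$. Once both channels are constant, $E_i$ cannot vary with $\pi$.

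First I would make precise the claim that $\pi$ enters $E_i$ only through $(A_{12},A_{34})$. The week-$3$ strategies influence nothing beyond the two games $a_1$ versus $a_2$ and $a_3$ versus $a_4$; once the winners of these two games are fixed, the win vector $W$, hence the seeding and the entire knockout bracket, is determined, and the remaining tournament randomness is independent of how those two games were contested. Conditioning on the four joint outcomes of the two simultaneous games, each conditional championship probability is a function of $V$ and the $T_{ij}$ alone, while the probability of the joint outcome ``$a_1$ beats $a_2$ and $a_3$ beats $a_4$'' is exactly $A_{12}A_{34}$ by independence, and similarly for the other three. Thus $E_i$ is a fixed bilinear expression in $(A_{12},1-A_{12})$ and $(A_{34},1-A_{34})$ whose coefficients do not involve $\pi$.

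Concretely, in the notation of equation system \ref{expectation} this reduction is encoded by the structural identities $Q_{1a}=Q_{1d}$, $Q_{2a}=Q_{2d}$ (and the analogous $Q_{3a}=Q_{3d}$, $Q_{4a}=Q_{4d}$): because $v_{12}=\frac{1}{2}$, both $a_1,a_2$ trying produces the same $\frac{1}{2}$--$\frac{1}{2}$ split as both tanking and flipping a fair coin, so the subsequent championship probability is identical in the two cases. Writing $R_1$ and $R_2$ for the conditional probabilities that $a_1$ becomes champion given respectively that $a_1$ or $a_2$ wins the game (with $A_{34}$ held at its value), one has $Q_{1c}=R_1$, $Q_{1b}=R_2$, and $Q_{1a}=Q_{1d}=\frac{1}{2}(R_1+R_2)$. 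Substituting into $E_1$ with $\pi_1=\pi_2=s$, the coefficient of $s^2$, namely $Q_{1a}-Q_{1b}-Q_{1c}+Q_{1d}$, and the coefficient of $s$, namely $Q_{1b}+Q_{1c}-2Q_{1d}$, both vanish, leaving $E_1=\frac{1}{2}(R_1+R_2)$ independent of $s$. The same cancellation runs identically for $E_2$, $E_3$, $E_4$.

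I expect the main obstacle to be the verification in the second paragraph---that the championship probability genuinely factors through the winners of the two week-$3$ games with $\pi$-free conditional probabilities---since it is precisely this reduction that collapses the apparent four-variable dependence of $E_i$ down to a dependence on $A_{12}$ and $A_{34}$ only. Everything afterward is the short algebraic cancellation above, combined with the already-proved evaluations $A_{12}=A_{34}=\frac{1}{2}$ from Corollary \ref{xyzw}; since both arguments of the bilinear form are constant in $\pi$, each $E_i$ is constant in $\pi$ as claimed.
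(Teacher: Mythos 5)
Your proposal is correct and follows essentially the same route as the paper: after setting $\pi_1=\pi_2$, write $E_1$ as a quadratic in $\pi_1$ and observe that the coefficients $Q_{1a}-Q_{1b}-Q_{1c}+Q_{1d}$ and $Q_{1b}+Q_{1c}-2Q_{1d}$ vanish (the paper asserts these identities from $T_{1A}=T_{1B}$ and $A_{12}=A_{34}=\tfrac{1}{2}$, while you supply the cleaner structural reason $Q_{1a}=Q_{1d}=\tfrac{1}{2}(Q_{1b}+Q_{1c})$ coming from $v_{12}=\tfrac{1}{2}$). Your preliminary observation that $\pi$ enters each $E_i$ only through the frozen quantities $A_{12}$ and $A_{34}$ is a valid and slightly more transparent packaging of the same cancellation.
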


\begin{proof}
\begin{align*} \label{e14}
E_1&=\pi_1\pi_2Q_{1a}+(1-\pi_1)\pi_2Q_{1b}+\pi_1(1-\pi_2)Q_{1c}+(1-\pi_1)(1-\pi_2)Q_{1d}\\ &=\pi_1^2(Q_{1a}-Q_{1b}-Q_{1c}+Q_{1d})+\pi_1(Q_{1b}+Q_{1c}-2Q_{1d})+Q_{1d}
\end{align*}
Due to $T_{1A}= T_{1B}$, also by equation \ref{e13} we know $A_{12}=A_{34}=\frac{1}{2}$, we can get 
\begin{equation} \label{e15}
Q_{1a}-Q_{1b}-Q_{1c}+Q_{1d}=0
\end{equation}
\begin{equation} \label{e16}
Q_{1b}+Q_{1c}-2Q_{1d}=0
\end{equation}
Thus, $E_1=Q_{1d}$. Similarly, we can get $E_2=Q_{2d}$,$E_3=Q_{3d}$,$E_4=Q_{4d}$. This proves that $E_i$ is independent with the strategy variable $\pi$. 
\end{proof}

Hence, we can draw the following conclusion:

\begin{thm}{Conclusion} \label{concl}
\\
If $v_1=v_2=1$, $v_3=v_4\leq 1$, then all $\pi_1=\pi_2$, $\pi_3=\pi_4$ are Nash equilibriums. Otherwise, all teams should use pure strategy.
\end{thm}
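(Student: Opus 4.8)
The plan is to assemble the Conclusion from the chain of results already established, treating its two directions separately: that the special weights admit a continuum of mixed equilibria, and that every other weight vector forces pure play. Throughout I work with the representative performance vector $W=[1,2,0,1]$ and note at the end that the other vectors are handled by identical bookkeeping.

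First I would dispatch the ``otherwise'' half. By the criterion that a nonzero own-partial $\partial E_i/\partial\pi_i$ forces team $a_i$ to an endpoint, a team can randomize in equilibrium only where its own partial vanishes; hence an interior equilibrium $\pi\in(0,1)^4$ requires all four partials to be zero, which by Proposition \ref{propp} is exactly the equilibrium condition and, after clearing denominators in system \ref{prob}, is the pair of rational equations \ref{e1}--\ref{e2}. Theorem \ref{solution} (via the Chebyshev sum inequality applied to the orderings of Lemma \ref{T1}) shows these are solvable only when $v_1=v_2=1$ and $v_3=v_4\le 1$. Taking the contrapositive, outside this weight locus no interior equilibrium exists; since Nash's Existence Theorem \ref{exis} guarantees at least one equilibrium and none of them can be interior, the equilibrium the teams settle on lies in $\{0,1\}^4$, i.e. all teams play pure.

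Next I would prove the ``if'' half. Assume $v_1=v_2=1$, $v_3=v_4\le1$ and fix any profile with $\pi_1=\pi_2$ and $\pi_3=\pi_4$. The same expansion of $A_{34}$ used in Corollary \ref{xyzw} (now with $v_{34}=\tfrac12$) collapses $\pi_3=\pi_4$ to $A_{34}=\tfrac12$, and symmetrically $\pi_1=\pi_2$ gives $A_{12}=\tfrac12$, matching equation \ref{e13}. With $A_{34}=\tfrac12$ and $T_{1A}=T_{1B}$ (forced by $v_1=v_2$), the identities \ref{e15}--\ref{e16} hold, so $E_1$ reduces to $Q_{1d}$ along the diagonal. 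The decisive step is to upgrade this to the statement that the four conditional payoffs $Q_{1a},Q_{1b},Q_{1c},Q_{1d}$ all coincide, which makes $E_1$ independent of $\pi_1$ for every fixed $\pi_2$ (not merely constant along $\pi_1=\pi_2$). Once each $E_i$ is independent of $a_i$'s own strategy, no unilateral deviation can strictly help, so the profile is a Nash equilibrium; as this holds for every such $\pi$, the claimed continuum follows.

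The hard part is that last indifference computation. Relations \ref{e15}--\ref{e16} only give $Q_{1a}=Q_{1d}$ and $Q_{1b}+Q_{1c}=2Q_{1d}$, whence $\partial E_1/\partial\pi_1=Q_{1c}-Q_{1d}$; to kill this I must separately show $Q_{1b}=Q_{1c}$, i.e. that $a_1$'s title probability is the same whether it wins or loses its last game. This is a Schwenk-type cancellation — a better seed is exactly offset by a tougher bracket — and is not implied by the earlier identities. Verifying it means expanding $Q_{1b}$ and $Q_{1c}$ by enumerating the post-season standings, resolving every tie through the fair-coin seeding rule, mapping each realization to a tournament of type $A$, $B$, or $C$, and summing; writing $T_{1A}=T_{1B}=:p$ and $T_{1C}=:q$ one finds $Q_{1b}=Q_{1c}=\tfrac{5}{6}p+\tfrac{1}{6}q$, so all four $Q_{1j}$ agree and $\partial E_1/\partial\pi_1=0$. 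I would carry this enumeration out in full for $W=[1,2,0,1]$ and the symmetric bracket, then observe that the remaining performance vectors reduce to the same casework, giving the Conclusion for all $m$ and $W$.
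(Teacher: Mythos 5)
Your proposal is correct and follows the paper's own route: Proposition \ref{propp} plus Theorem \ref{solution} and Nash's Existence Theorem \ref{exis} for the ``otherwise'' half, and the collapse $A_{12}=A_{34}=\tfrac12$ for the ``if'' half. The one place you genuinely add something is the last indifference computation. You correctly observe that the corollary preceding the Conclusion only shows $E_1=Q_{1d}$ \emph{along} the diagonal $\pi_1=\pi_2$, which does not by itself rule out a profitable unilateral deviation off the diagonal, and that identities \ref{e15}--\ref{e16} leave $\partial E_1/\partial\pi_1=Q_{1c}-Q_{1d}$ alive, so one must separately check $Q_{1b}=Q_{1c}$. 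Your enumeration giving $Q_{1b}=Q_{1c}=\tfrac{5}{6}p+\tfrac16 q$ is right. It is worth noting, though, that this check is already implicitly contained in the paper: the first equation of system \ref{prob}, $A_{34}=\frac{4T_{1A}-2T_{1B}-2T_{1C}}{5T_{1A}-T_{1B}-4T_{1C}}$, is precisely the condition $Q_{1b}=Q_{1c}$ (equivalently $\partial E_1/\partial\pi_1=0$), and equation \ref{e13} verifies that it reduces to $A_{34}=\tfrac12$ when $T_{1A}=T_{1B}$; combined with Corollary \ref{xyzw} this closes the Nash verification without re-enumerating brackets. So your extra casework is a valid, more self-contained substitute for \ref{e13} rather than a new idea. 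Both your argument and the paper's share the same unaddressed edge case: the dichotomy ``interior equilibrium versus vertex of $\{0,1\}^4$'' silently excludes partially mixed profiles (some coordinates in $(0,1)$, others at an endpoint), which the stated form of Proposition \ref{propp} does not cover.
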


Now, we want to raise a question: How to compute the Nash equilibrium if $\pi_1 \neq \pi_2$ or $\pi_3 \neq \pi_4$? From theorem \ref{concl}, we know that all teams have pure strategy if $\pi_1 \neq \pi_2$ or $\pi_3 \neq \pi_4$, i.e. $\pi \in \{0,1\}^4$. Since $card(\{0,1\}^4)=2^4=16$, we can plug all these 16 possible $\pi$ to the verification process: given a vector $\pi$, we can calculate all $Q_{ij}$ and draw the game theory table. Then, we can check whether the game theory problem has dominant strategy or not. By theorem \ref{exis}, we know that there exists some solutions among the 16 possible $\pi$.

If we are the coach of a sport team, to determine whether to try to win the next game or to lose intentionally, we can use our algorithm to get the answer. We first approximate the team weights $v_1$, $v_2$, $v_3$, $v_4$ by previous data. If the win vector before the next game is $W$, by applying our algorithm, we can get the strategy $\pi=(\pi_1,\pi_2,\pi_3,\pi_4)$ and the probability of winning the champion of each team $E_1$, $E_2$, $E_3$, $E_4$. 

\section{Future Works}
One important question is that whether theorem \ref{concl} holds when there are eight teams. A big difference between the four-team model and eight-team model is that under the four-team model, lemma \ref{T1} holds for any $v_1 \geq v_2 \geq v_3 \geq v_4$. However, under the eight-team model, according to the model created by J. Schwenk \citep{schwenk2000correct}, he showed that under some specific situations, the second seed has larger probability than the first seed to win the tournament, which implies that we cannot generalize lemma \ref{T1} for the eight-team model. Hence, one of the future work is to find the Nash equilibrium under the eight-team model. One possible method is following the definition of Nash equilibrium, proposition \ref{propp}, to solve the equation system. However, without lemma \ref{T1}, we may not draw the conclusion that all teams should use pure strategy for most cases.

\bibliographystyle{plain}
\bibliography{research_essay_0521}

\begin{thebibliography}{1}

\bibitem{MR46638}
I.~L. Glicksberg.
\newblock A further generalization of the {K}akutani fixed theorem, with
  application to {N}ash equilibrium points.
\newblock {\em Proc. Amer. Math. Soc.}, 3:170--174, 1952.

\bibitem{MR0046395}
G.~H. Hardy, J.~E. Littlewood, and G.~P\'{o}lya.
\newblock {\em Inequalities}.
\newblock Cambridge, at the University Press, 1952.
\newblock 2d ed.

\bibitem{schwenk2000correct}
Allen~J Schwenk.
\newblock What is the correct way to seed a knockout tournament?
\newblock {\em The American Mathematical Monthly}, 107(2):140--150, 2000.

\bibitem{vu2011fair}
Thuc Vu and Yoav Shoham.
\newblock Fair seeding in knockout tournaments.
\newblock {\em ACM Transactions on Intelligent Systems and Technology (TIST)},
  3(1):9, 2011.

\end{thebibliography}

\appendix
\appendixpage
\addappheadtotoc

\section{Proofs for FRNS Model}
\subsection{Proof of Theorem \ref{thm:ordertourn}}
\begin{proof}
\begin{align*}
 T_A &= \frac{v_1}{v_1+v_4} \left(\frac{v_1}{v_1+v_2}\frac{v_2}{v_2+v_3}+\frac{v_1}{v_1+v_3}\frac{v_3}{v_2+v_3} \right)=\frac{v_1^2 [v_2 (v_1+v_3 )+v_3 (v_1+v_2 )]}{(v_1+v_4 )(v_1+v_2 )(v_1+v_3 )(v_2+v_3 )} \\
 T_B &= \frac{v_1}{v_1+v_3} \left(\frac{v_1}{v_1+v_2} \frac{v_2}{v_2+v_4}+\frac{v_1}{v_1+v_4} \frac{v_4}{v_2+v_4} \right)=\frac{v_1^2 [v_2 (v_1+v_4 )+v_3 (v_2+v_4 )]}{(v_1+v_4 )(v_1+v_2 )(v_1+v_3 )(v_2+v_4 )} \\
 T_C &= \frac{v_1}{v_1+v_2} \left(\frac{v_1}{v_1+v_3} \frac{v_3}{v_3+v_4}+\frac{v_1}{v_1+v_4} \frac{v_4}{v_3+v_4}\right)=\frac{v_1^2 [v_3 (v_1+v_4 )+v_4 (v_1+v_3 )]}{(v_1+v_4 )(v_1+v_2 )(v_1+v_3 )(v_3+v_4 )} 
\end{align*}
		
Compare $P_A$ and $P_B$, we compute 
\begin{align*}  
T_A-T_B &= \left(\frac{v_1^2}{(v_1+v_4 )(v_1+v_2 )(v_1+v_3)} \right)\left(\frac{v_2 (v_1+v_3 )+v_3 (v_1+v_2 )}{v_2+v_3}-\frac{v_2 (v_1+v_4 )+v_3 (v_2+v_4 )}{v_2+v_4} \right)\\
&=\left(\frac{v_1^2}{(v_1+v_4 )(v_1+v_2 )(v_1+v_3)} \right)(v_1 v_2+2v_2 v_3+v_1 v_3 )(v_2+v_4 )-(v_1 v_2+v_2 v_3+v_2 v_4+v_3 v_4 )(v_2+v_3 )\\&=\left(\frac{v_1^2}{(v_1+v_4 )(v_1+v_2 )(v_1+v_3)} \right)(2v_2 v_3^2-2v_3^2 v_4)=\left(\frac{v_1^2}{(v_1+v_4 )(v_1+v_2 )(v_1+v_3)} \right)(2v_3^2 (v_2-v_4))
\end{align*}
$\because$ $v_1 \geq v_2 \geq v_3 \geq v_4$, $\therefore$ $T_A-T_B\geq 0$ 

Then, compare T$_B$ and T$_C$, we compute
\begin{align*}
T_B-T_C&= \left(\frac{v_1^2}{(v_1+v_4 )(v_1+v_2 )(v_1+v_3 )} \right)\left(\frac{v_2 (v_1+v_4 )+v_3 (v_2+v_4 )}{v_2+v_4}-\frac{v_3 (v_1+v_4 )+v_4 (v_1+v_3 )}{v_3+v_4} \right)\\&= \left(\frac{v_1^2}{(v_1+v_4 )(v_1+v_2 )(v_1+v_3 )} \right)\left((v_1 v_2+v_2 v_4+v_2 v_3+v_3 v_4 )(v_3+v_4 )-(v_1 v_3+v_1 v_4+2v_3 v_4 )(v_2+v_4 ) \right)\\&=\left(\frac{v_1^2}{(v_1+v_4 )(v_1+v_2 )(v_1+v_3 )} \right)\left((v_3^2 v_4-v_3 v_4^2 )+(v_2 v_3^2-v_2 v_3 v_4 )+(v_1 v_2 v_4-v_1 v_3 v_4) \right) \\&= \left(\frac{v_1^2}{(v_1+v_4 )(v_1+v_2 )(v_1+v_3 )} \right)\left(v_3v_4(v_3-v_4)+v_2v_3(v_2-v_4)+v_1v_4(v_2-v_3) \right)
\end{align*}
$\because$ $v_1 \geq v_2 \geq v_3 \geq v_4$, $\therefore$ $T_B-T_C\geq 0$ 

Thus, $T_A\geq T_B\geq T_C$ for any $v_1$, $v_2$, $v_3$, $v_4$, subject to $v_1 \geq v_2 \geq v_3 \geq v_4$.
\end{proof}

\subsection{Proof of Theorem \ref{alwayswin}}
\begin{proof}
\begin{align*}
P_{\pi=(1,1,1,1)}^{(1)}(3,W,V)-P_{\pi=(0,1,1,1)}^{(1)}(3,W,V)&=\left(p_{12}p_{34}T_A+p_{12}p_{43}T_B+p_{21}p_{34}T_B+p_{21}p_{43}T_A \right)\\&-\left(p_{34}T_A+p_{43}T_B \right)\\&= T_Ap_{12}(2p_{34}-1)+T_Bp_{12}(1-2p_{34})\\&=(T_A-T_B)p_{12}(2p_{34}-1)
\end{align*}
$\because$ $T_A \geq T_B, p_{34} \geq \frac{1}{2}$, i.e. $2p_{34}-1\geq 0$, $\therefore$ $P_{\pi=(1,1,1,1)}^{(1)}(3,W,V) \geq P_{\pi=(0,1,1,1)}^{(1)}(3,W,V)$ for all $V=[v_1, v_2, v_3, v_4]$.

This implies that it is always sensible for $a_1$ to try to win if the performance vector $W$ of first two week is $[2,2,0,0]$.
\end{proof}

\subsection{Proof of Theorem \ref{1201t}}
\begin{proof}
Under $W=[1,2,0,1]$ or $[0,1,1,2]$, we have
\begin{align*}
P_{\pi=(1,1,1,1)}^{(1)}(3,W,V)-P_{\pi=(0,1,1,1)}^{(1)}(3,W,V)&=(p_{21}-1)p_{34}T_A+(p_{21}-1)p_{43}\frac{1}{3}(T_A+T_B+T_C)\\&\ \ \ \ +p_{12}p_{43}\frac{1}{3}(T_A+T_B+T_C)+ p_{21}p_{43}\frac{1}{2}(T_A+T_B)\\& = p_{12}\left(-\frac{2}{3}T_A+\frac{1}{3}T_B+\frac{1}{3}T_C+p_{34}(\frac{5}{6}T_A-\frac{1}{6}T_B-\frac{2}{3}T_C)\right)
\end{align*}

Since we want to find $V=(v_1,v_2,v_3,v_4)$ such that $P_{\pi=(1,1,1,1)}^{(1)}(3,W,V)-P_{\pi=(0,1,1,1)}^{(1)}(3,W,V) \geq 0$, which is equal to solve for $p_{34} \geq \frac{4T_A-2T_B-2T_C}{5T_A-T_B-4T_C}$. Hence, we want to solve the following inequality:

\begin{equation}
\frac{v_3}{v_3+v_4} \geq \frac{4T_A-2T_B-2T_C}{5T_A-T_B-4T_C}
\end{equation}

By expanding the right side of the inequality, we can get 
\begin{align*}
& 3v_2v_3v_4^2+2v_2v_4^3+6v_2^2v_3^2v_4+v_2v_3^2v_4+v_3^3v_4+2v_3v_4^3+10v_3^2v_4^2-3v_2^2v_3^3 \\ & -
v_2^2v_3v_4^2-2v_2^2v_4^3-v_2v_3^4-3v_2v_3^3v_4-2v_2v_3^2v_4^2-v_3^4v_4-6v_3^3v_4^2-6v_3^2v_4^3 \leq 0
\end{align*}
which leads to theorem \ref{1201t}.
\end{proof}

\subsection{Proof of Theorem \ref{1210t}}
\begin{proof}
Under $W=[1,2,1,0]$, we have
\begin{align*}
P_{\pi=(1,1,1,1)}^{(1)}(3,W,V)-P_{\pi=(0,1,1,1)}^{(1)}(3,W,V)&=(p_{21}-1)p_{34}T_B+(p_{21}-1)p_{43}\frac{1}{3}(T_A+T_B+T_C)\\&\ \ \ \ +p_{12}p_{34}\frac{1}{2}(T_A+T_B)+ p_{12}p_{43}\frac{1}{3}(T_A+T_B+T_C)\\& = p_{12}\left(\frac{1}{6}T_A+\frac{1}{6}T_B-\frac{1}{3}T_C+p_{34}(\frac{1}{6}T_A-\frac{5}{6}T_B+\frac{2}{3}T_C)\right)
\end{align*}

Since we want to find $V=(v_1,v_2,v_3,v_4)$ such that $P_{\pi=(1,1,1,1)}^{(1)}(3,W,V)-P_{\pi=(0,1,1,1)}^{(1)}(3,W,V) \geq 0$, which is equal to solve for $p_{34} \geq \frac{T_A+T_B-2T_C}{-T_A+5T_B-4T_C}$. Hence, we want to solve the following inequality:

\begin{equation}
\frac{v_3}{v_3+v_4} \geq \frac{T_A+T_B-2T_C}{-T_A+5T_B-4T_C}
\end{equation}

By expanding the right side of the inequality, we can get 
\begin{align*}
& 3v_2^2v_3^2v_4+v_2v_3^4+v_2v_3^3v_4+v_3^3v_4^2+4v_3^3v_4+4v_3^2v_4^2+5v_2v_3v_4 \\ & -4v_2^2v_3v_4^2-5v_2^2v_4^3-4v_3^4v_4-2v_2v_3v_4^2-v_3^2v_4-v_3v_4^2 \leq 0
\end{align*}
which leads to theorem \ref{1210t}.
\end{proof}

\subsection{Proof of Lemma \ref{T1}}
\begin{proof}
By theorem \ref{thm:ordertourn}, we know that if $v_1 \geq v_2 \geq v_3 \geq v_4$, then
\begin{equation}
T_{1A} \geq T_{1B} \geq T_{1C}
\end{equation}

Similar to theorem \ref{thm:ordertourn}, we calculate $T_{2A}$, $T_{2B}$, $T_{2C}$,
\begin{align*}
 T_{2A} &= \frac{v_2}{v_2+v_3} \left(\frac{v_2}{v_1+v_2}\frac{v_1}{v_1+v_4}+\frac{v_2}{v_2+v_4}\frac{v_4}{v_1+v_4} \right)=\frac{v_2^2 [v_1 (v_2+v_4 )+v_4 (v_1+v_2 )]}{(v_1+v_2 )(v_1+v_4 )(v_2+v_3 )(v_2+v_4 )} \\
 T_{2B} &= \frac{v_2}{v_2+v_4} \left(\frac{v_2}{v_1+v_2}\frac{v_1}{v_1+v_3}+\frac{v_2}{v_2+v_3}\frac{v_3}{v_1+v_3} \right)=\frac{v_2^2 [v_1 (v_2+v_3 )+v_3 (v_1+v_2 )]}{(v_1+v_2 )(v_1+v_3 )(v_2+v_3 )(v_2+v_4 )} \\
 T_{2C} &= \frac{v_2}{v_1+v_2} \left(\frac{v_2}{v_2+v_3}\frac{v_3}{v_3+v_4}+\frac{v_2}{v_2+v_4}\frac{v_4}{v_3+v_4} \right)=\frac{v_2^2 [v_3 (v_2+v_4 )+v_4 (v_2+v_3 )]}{(v_1+v_2 )(v_3+v_4 )(v_2+v_3 )(v_2+v_4 )}
\end{align*}

Compare $T_{2B}$ and $T_{2A}$, we compute 
\begin{align*}  
T_{2B}-T_{2A} &= \left(\frac{v_2^2}{(v_1+v_2 )(v_2+v_3 )(v_2+v_4)} \right)\left(\frac{v_1 (v_2+v_3 )+v_3 (v_1+v_2 )}{v_1+v_3}-\frac{v_1 (v_2+v_4 )+v_4 (v_1+v_2 )}{v_1+v_4} \right)\\
&=\left(\frac{v_2^2}{(v_1+v_2 )(v_2+v_3 )(v_2+v_4)} \right)(v_1 v_2+2v_1 v_3+v_2 v_3 )(v_1+v_4 )-(v_1 v_2+2v_1 v_4+v_2 v_4 )(v_1+v_3 )\\&=\left(\frac{v_1^2}{(v_1+v_4 )(v_1+v_2 )(v_1+v_3)} \right)(2v_1^2 v_3-2v_1^2 v_4)=\left(\frac{v_1^2}{(v_1+v_4 )(v_1+v_2 )(v_1+v_3)} \right)(2v_1^2 (v_3-v_4))
\end{align*}
$\because$ $v_1 \geq v_2 \geq v_3 \geq v_4$, $\therefore$ $T_{2B}-T_{2A}\geq 0$ 

Then, compute $T_{2A}-T_{2C}$, we can get
\begin{align*}  
T_{2A}-T_{2C} &= \left(\frac{v_2^2}{(v_1+v_2 )(v_2+v_3 )(v_2+v_4)} \right)\left(\frac{v_1 (v_2+v_4 )+v_4 (v_1+v_2 )}{v_1+v_4}-\frac{v_3 (v_2+v_4 )+v_4 (v_2+v_3 )}{v_3+v_4} \right)\\
&=\left(\frac{v_2^2}{(v_1+v_2 )(v_2+v_3 )(v_2+v_4)} \right)(v_1 v_2+2v_1 v_4+v_2 v_4 )(v_3+v_4 )-(v_2 v_3+v_2 v_4+2v_3 v_4 )(v_1+v_4 )\\&=\left(\frac{v_1^2}{(v_1+v_4 )(v_1+v_2 )(v_1+v_3)} \right)(2v_1 v_4^2-2v_3 v_4^2)=\left(\frac{v_1^2}{(v_1+v_4 )(v_1+v_2 )(v_1+v_3)} \right)(2v_4^2 (v_1-v_3))
\end{align*}
$\because$ $v_1 \geq v_2 \geq v_3 \geq v_4$, $\therefore$ $T_{2A}-T_{2C}\geq 0$ 

Hence, we can draw the conclusion that 
\begin{equation}
T_{2B} \geq T_{2A} \geq T_{2C}
\end{equation}

Next, we calculate $T_{3A}$, $T_{3B}$, $T_{3C}$,
\begin{align*}
 T_{3A} &= \frac{v_3}{v_2+v_3} \left(\frac{v_3}{v_1+v_3}\frac{v_1}{v_1+v_4}+\frac{v_3}{v_3+v_4}\frac{v_4}{v_1+v_4} \right)=\frac{v_3^2 [v_1 (v_3+v_4 )+v_4 (v_1+v_3 )]}{(v_1+v_3 )(v_1+v_4 )(v_2+v_3 )(v_3+v_4 )} \\
 T_{3B} &= \frac{v_3}{v_1+v_3} \left(\frac{v_3}{v_2+v_3}\frac{v_2}{v_2+v_4}+\frac{v_3}{v_3+v_4}\frac{v_4}{v_2+v_4} \right)=\frac{v_3^2 [v_2 (v_3+v_4 )+v_4 (v_2+v_3 )]}{(v_1+v_3 )(v_2+v_3 )(v_2+v_4 )(v_3+v_4 )} \\
 T_{3C} &= \frac{v_3}{v_3+v_4} \left(\frac{v_3}{v_1+v_3}\frac{v_1}{v_1+v_2}+\frac{v_3}{v_2+v_3}\frac{v_2}{v_1+v_2} \right)=\frac{v_3^2 [v_1 (v_2+v_3 )+v_2 (v_1+v_3 )]}{(v_1+v_2 )(v_1+v_3 )(v_2+v_3 )(v_3+v_4 )}
\end{align*}

Compare $T_{3C}$ and $T_{3A}$, we compute 
\begin{align*}  
T_{3C}-T_{3A} &= \left(\frac{v_3^2}{(v_1+v_3 )(v_2+v_3 )(v_3+v_4)} \right)\left(\frac{v_1 (v_2+v_3 )+v_2 (v_1+v_3 )}{v_1+v_2}-\frac{v_1 (v_3+v_4 )+v_4 (v_1+v_3 )}{v_1+v_4} \right)\\
&=\left(\frac{v_3^2}{(v_1+v_3 )(v_2+v_3 )(v_3+v_4)} \right)(2v_1 v_2+v_1 v_3+v_2 v_3 )(v_1+v_4 )-(v_1 v_3+2v_1 v_4+v_3 v_4 )(v_1+v_2 )\\&=\left(\frac{v_1^2}{(v_1+v_4 )(v_1+v_2 )(v_1+v_3)} \right)(2v_1^2 v_2-2v_1^2 v_4)=\left(\frac{v_3^2}{(v_1+v_3 )(v_2+v_3 )(v_3+v_4)} \right)(2v_1^2 (v_2-v_4))
\end{align*}
$\because$ $v_1 \geq v_2 \geq v_3 \geq v_4$, $\therefore$ $T_{3C}-T_{3A}\geq 0$ 

Then, we compare $T_{3A}$ and $T_{3B}$,
\begin{align*}  
T_{3A}-T_{3B} &= \left(\frac{v_3^2}{(v_1+v_3 )(v_2+v_3 )(v_3+v_4)} \right)\left(\frac{v_1 (v_3+v_4 )+v_4 (v_1+v_3 )}{v_1+v_4}-\frac{v_2 (v_3+v_4 )+v_4 (v_2+v_3 )}{v_2+v_4} \right)\\
&=\left(\frac{v_3^2}{(v_1+v_3 )(v_2+v_3 )(v_3+v_4)} \right)((v_1 v_3+2v_1 v_4+v_3 v_4 )(v_2+v_4 )-(v_2 v_3+2v_2 v_4+v_3 v_4 )(v_1+v_4 )\\&=\left(\frac{v_1^2}{(v_1+v_4 )(v_1+v_2 )(v_1+v_3)} \right)(2v_1 v_4^2-2v_2 v_4^2)=\left(\frac{v_3^2}{(v_1+v_3 )(v_2+v_3 )(v_3+v_4)} \right)(2v_4^2 (v_1-v_2))
\end{align*}
$\because$ $v_1 \geq v_2 \geq v_3 \geq v_4$, $\therefore$ $T_{3A}-T_{3B}\geq 0$ 

Hence, we can draw the conclusion that 
\begin{equation}
T_{3C} \geq T_{3A} \geq T_{3B}
\end{equation}

Finally, we calculate $T_{4A}$, $T_{4B}$, $T_{4C}$,
\begin{align*}
 T_{4A} &= \frac{v_4}{v_1+v_4} \left(\frac{v_4}{v_2+v_4}\frac{v_2}{v_2+v_3}+\frac{v_4}{v_3+v_4}\frac{v_3}{v_2+v_3} \right)=\frac{v_4^2 [v_2 (v_3+v_4 )+v_3 (v_2+v_4 )]}{(v_1+v_4 )(v_2+v_4 )(v_2+v_3 )(v_3+v_4 )} \\
 T_{4B} &= \frac{v_4}{v_2+v_4} \left(\frac{v_4}{v_1+v_4}\frac{v_1}{v_1+v_3}+\frac{v_4}{v_3+v_4}\frac{v_3}{v_1+v_3} \right)=\frac{v_4^2 [v_1 (v_3+v_4 )+v_3 (v_1+v_4 )]}{(v_1+v_4 )(v_1+v_3 )(v_2+v_4 )(v_3+v_4 )} \\
 T_{4C} &= \frac{v_4}{v_3+v_4} \left(\frac{v_4}{v_1+v_4}\frac{v_1}{v_1+v_2}+\frac{v_4}{v_2+v_4}\frac{v_2}{v_1+v_2} \right)=\frac{v_4^2 [v_1 (v_2+v_4 )+v_2 (v_1+v_4 )]}{(v_1+v_2 )(v_1+v_4 )(v_2+v_4 )(v_3+v_4 )}
\end{align*}

Compare $T_{4C}$ and $T_{4B}$, we compute 
\begin{align*}  
T_{4C}-T_{4B} &= \left(\frac{v_4^2}{(v_1+v_4 )(v_2+v_4 )(v_3+v_4)} \right)\left(\frac{v_1 (v_2+v_4 )+v_2 (v_1+v_4 )}{v_1+v_2}-\frac{v_1 (v_3+v_4 )+v_3 (v_1+v_4 )}{v_1+v_3} \right)\\
&=\left(\frac{v_4^2}{(v_1+v_4 )(v_2+v_4 )(v_3+v_4)} \right)(2v_1 v_2+v_1 v_4+v_2 v_4 )(v_1+v_3 )-(2v_1 v_3+v_1 v_4+v_3 v_4 )(v_1+v_2 )\\&=\left(\frac{v_4^2}{(v_1+v_4 )(v_2+v_4 )(v_3+v_4)} \right)(2v_1^2 v_2-2v_1^2 v_3)=\left(\frac{v_4^2}{(v_1+v_4 )(v_2+v_4 )(v_3+v_4)} \right)(2v_1^2 (v_2-v_3))
\end{align*}
$\because$ $v_1 \geq v_2 \geq v_3 \geq v_4$, $\therefore$ $T_{4C}-T_{4B}\geq 0$ 

Then, we compare $T_{4B}$ and $T_{4A}$,
\begin{align*}  
T_{4B}-T_{4A} &= \left(\frac{v_4^2}{(v_1+v_4 )(v_2+v_4 )(v_3+v_4)} \right)\left(\frac{v_1 (v_3+v_4 )+v_3 (v_1+v_4 )}{v_1+v_3}-\frac{v_2 (v_3+v_4 )+v_3 (v_2+v_4 )}{v_2+v_3} \right)\\
&=\left(\frac{v_4^2}{(v_1+v_4 )(v_2+v_4 )(v_3+v_4)} \right)(2v_1 v_3+v_1 v_4+v_3 v_4 )(v_2+v_3 )-(2v_2 v_3+v_2 v_4+v_3 v_4 )(v_1+v_3 )\\&=\left(\frac{v_4^2}{(v_1+v_4 )(v_2+v_4 )(v_3+v_4)} \right)(2v_1 v_3^2-2v_2 v_3^2)=\left(\frac{v_4^2}{(v_1+v_4 )(v_2+v_4 )(v_3+v_4)} \right)(2v_3^2 (v_1-v_2))
\end{align*}
$\because$ $v_1 \geq v_2 \geq v_3 \geq v_4$, $\therefore$ $T_{4B}-T_{4A}\geq 0$ 

Hence, we can draw the conclusion that 
\begin{equation}
T_{4C} \geq T_{4B} \geq T_{4A}
\end{equation}
\end{proof}

\end{document}